\newtheorem{thm}{Theorem}
\newtheorem{cor}{Corollary}
\newtheorem{lemma}{Lemma}
\newtheorem{prop}{Proposition}
\newtheorem{defn}{Definition}
\newtheorem{remark}{Remark}
\newtheorem*{ex}{Generic Example}
\newtheorem{nt}{Notation}
\begin{document}

\title[The Kauffman bracket skein module of the complement of $(2, 2p+1)$-torus knots via braids]
  {The Kauffman bracket skein module of the complement of $(2, 2p+1)$-torus knots via braids}

\author{Ioannis Diamantis}
\address{ International College Beijing,
China Agricultural University,
No.17 Qinghua East Road, Haidian District,
Beijing, {100083}, P. R. China.}
\email{ioannis.diamantis@hotmail.com}



\setcounter{section}{-1}

\date{}

\begin{abstract}
In this paper we compute the Kauffman bracket skein module of the complement of $(2, 2p+1)$-torus knots, $KBSM(T_{(2, 2p+1)}^c)$, via braids. We start by considering geometric mixed braids in $S^3$, the closure of which are mixed links in $S^3$ that represent links in the complement of $(2, 2p+1)$-torus knots, $T_{(2, 2p+1)}^c$. Using the technique of parting and combing geometric mixed braids, we obtain algebraic mixed braids, that is, mixed braids that belong to the mixed braid group $B_{2, n}$ and that are followed by their ``coset'' part, that represents $T_{(2, 2p+1)}^c$. In that way we show that links in $T_{(2, 2p+1)}^c$ may be pushed to the genus 2 handlebody, $H_2$, and we establish a relation between $KBSM(T_{(2, 2p+1)}^c)$ and $KBSM(H_2)$. In particular, we show that in order to compute $KBSM(T_{(2, 2p+1)}^c)$ it suffices to consider a basis of $KBSM(H_2)$ and study the effect of combing on elements in this basis. We consider the standard basis of $KBSM(H_2)$ and we show how to treat its elements in $KBSM(T_{(2, 2p+1)}^c)$, passing through many different spanning sets for $KBSM(T_{(2, 2p+1)}^c)$. These spanning sets form the intermediate steps in order to reach at the set $\mathcal{B}_{T_{(2, 2p+1)}^c}$, which, using an ordering relation and the notion of total winding, we prove that it forms a basis for $KBSM(T_{(2, 2p+1)}^c)$. Note that elements in $\mathcal{B}_{T_{(2, 2p+1)}^c}$ have no crossings on the level of braids, and in that sense, $\mathcal{B}_{T_{(2, 2p+1)}^c}$ forms a more natural basis of $KBSM(T_{(2, 2p+1)}^c)$ in our setting. We finally consider c.c.o. 3-manifolds $M$ obtained from $S^3$ by surgery along the trefoil knot and we discuss steps needed in order to compute the Kauffman bracket skein module of $M$. We first demonstrate the process described before for computing the Kauffman bracket skein module of the complement of the trefoil, $KBSM(Tr^c)$, and we study the effect of braid band moves on elements in the basis of $KBSM(Tr^c)$. These moves reflect isotopy in $M$ and are similar to the second Kirby moves.
\smallbreak
The ``braid'' method that we propose for computing Kauffman bracket skein modules seem promising in computing KBSM of arbitrary c.c.o. 3-manifolds $M$. The only difficulty lies in finding the sufficient relations that reduce elements in the basis of our underlying genus g-handlebody, $H_g$. These relations come from combing in the case of knot complements and from combing and braid band moves for 3-manifolds obtained by surgery along a knot in $S^3$. Our aim is to set the necessary background of this ``braid'' approach in order to compute Kauffman bracket skein modules of arbitrary 3-manifolds.
\bigbreak
\noindent 2020 {\it Mathematics Subject Classiffication.} 57K10, 57K12, 57K14, 57K35, 57K45, 57K99, 20F36, 20F38, 20C08.
\bigbreak
\noindent \textbf{Keywords.} Kauffman bracket polynomial, skein modules, handlebody, knot complement, parting, combing, mixed links, mixed braids, trefoil, mixed braid groups.
\end{abstract}

\maketitle

\begin{center}
\begin{minipage}{10cm}
{\scriptsize\tableofcontents}
\end{minipage}
\end{center}

\section{Introduction and overview}\label{intro}

Skein modules were independently introduced by Przytycki \cite{P} and Turaev \cite{Tu} as generalizations of knot polynomials in $S^3$ to knot polynomials in arbitrary 3-manifolds. The essence is that skein modules are quotients of free modules over ambient isotopy classes of links in 3-manifolds by properly chosen local (skein) relations.

\begin{defn}\rm
Let $M$ be an oriented $3$-manifold and $\mathcal{L}_{{\rm fr}}$ be the set of isotopy classes of unoriented framed links in $M$. Let $R=\mathbb{Z}[A^{\pm1}]$ be the Laurent polynomials in $A$ and let $R\mathcal{L}_{{\rm fr}}$ be the free $R$-module generated by $\mathcal{L}_{{\rm fr}}$. Let $\mathcal{S}$ be the ideal generated by the skein expressions $L-AL_{\infty}-A^{-1}L_{0}$ and $L \bigsqcup {\rm O} - (-A^2-A^{-2})L$, where $L_{\infty}$ and $L_{0}$ are represented schematically by the illustrations in Figure~\ref{skein}. Note that blackboard framing is assumed and that $L \bigsqcup {\rm O}$ stands for the union of a link $L$ and the trivial framed knot in a ball disjoint from $L$. 

\begin{figure}[!ht]
\begin{center}
\includegraphics[width=1.9in]{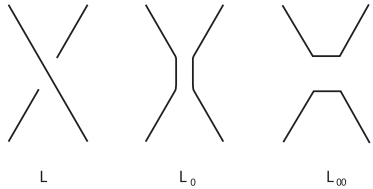}
\end{center}
\caption{The links $L_{\infty}$ and $L_{0}$ locally.}
\label{skein}
\end{figure}

\noindent Then the {\it Kauffman bracket skein module} of $M$, KBSM$(M)$, is defined to be:

\begin{equation*}
{\rm KBSM} \left(M\right)={\raise0.7ex\hbox{$
R\mathcal{L}_{{\rm fr}} $}\!\mathord{\left/ {\vphantom {R\mathcal{L_{{\rm fr}}} {\mathcal{S} }}} \right. \kern-\nulldelimiterspace}\!\lower0.7ex\hbox{$ S  $}}.
\end{equation*}
\end{defn}

In \cite{P} the Kauffman bracket skein module of the handlebody of genus 2, $H_2$, is computed using diagrammatic methods by means of the following theorem:

\begin{thm}[\cite{P}]\label{tprz}
The Kauffman bracket skein module of $H_2$, KBSM($H_2$), is freely generated by an infinite set of generators $\left\{x^i\, y^j\, z^k,\ (i, j, k)\in \mathbb{N}\times \mathbb{N}\times \mathbb{N}\right\}$, where $x^i\, y^j\, z^k$ is shown in Figure~\ref{przt}.
\end{thm}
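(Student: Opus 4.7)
The plan is to prove the theorem in two stages: a spanning statement, that every class in $\mathrm{KBSM}(H_2)$ is an $R$-linear combination of the monomials $x^iy^jz^k$, and a linear independence statement, that no non-trivial relation holds among these monomials.

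For spanning, I would identify $H_2$ with the thickened pair of pants $F_{0,3} \times I$, where $F_{0,3}$ is a disk with two open subdisks removed; this is a homeomorphism since $F_{0,3} \times I$ is a compact orientable $3$-manifold with free fundamental group of rank $2$ and boundary of genus $2$. Any framed link in $H_2$ then admits a generic projection to a diagram in $F_{0,3}$ with blackboard framing. At each crossing one applies the skein relation $L = A\,L_\infty + A^{-1} L_0$; iterating reduces the class of any diagram to an $R$-linear combination of crossingless diagrams, each a disjoint union of simple closed curves in $F_{0,3}$. Since every essential simple closed curve in $F_{0,3}$ is isotopic to a loop parallel to one of the three boundary components, and null-homotopic components can be eliminated using the framing relation at the cost of $-A^2 - A^{-2}$, every diagram reduces to an $R$-linear combination of the multicurves $x^iy^jz^k$.

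For linear independence, the natural strategy is to construct an $R$-linear invariant
\[
\Phi : \mathrm{KBSM}(H_2) \longrightarrow \bigoplus_{(i,j,k) \in \mathbb{N}^3} R \cdot x^iy^jz^k
\]
sending each basis multicurve $x^iy^jz^k$ to itself. Concretely, for a diagram $D$ in $F_{0,3}$ I would sum over all $2^{c(D)}$ Kauffman states, weighting each state by $A$ or $A^{-1}$ per $0$- or $\infty$-smoothing and by $-A^2-A^{-2}$ for each null-homotopic loop appearing in the resolved multicurve, and record the remaining essential multicurve as the corresponding monomial in $x,y,z$. Compatibility with the skein and framing relations is immediate by construction, so the only issue is descent from diagrams to framed isotopy classes. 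The main obstacle I expect is making Reidemeister II/III invariance of $\Phi$ fully airtight in the surface $F_{0,3}$: one must verify that the essential multicurve output is correctly tracked when a local move changes the combinatorics of the diagram, and that no Reidemeister move can convert one essential multicurve into a distinct one. Once established, $\Phi$ provides a left inverse to the spanning map, which shows that $\{x^iy^jz^k : (i,j,k) \in \mathbb{N}^3\}$ is a free basis of $\mathrm{KBSM}(H_2)$.
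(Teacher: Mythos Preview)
The paper does not contain a proof of this theorem: it is quoted from \cite{P} as a known foundational result and is used as input for the rest of the argument, so there is no proof in the paper to compare your proposal against.

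That said, your outline is essentially the standard argument and is correct in spirit. The identification $H_2\cong F_{0,3}\times I$ together with the skein resolution of crossings does reduce any class to an $R$-linear combination of multicurves, and the classification of essential simple closed curves in a pair of pants (each is parallel to one of the three boundary circles) gives exactly the monomials $x^iy^jz^k$. For linear independence, your state-sum construction is the right idea; the Reidemeister~II and~III checks are the same purely local computations as for the classical Kauffman bracket, since the smoothings and the count of trivial versus essential circles behave identically in any surface. The only caution is that you should phrase the target as the free $R$-module on isotopy classes of essential multicurves in $F_{0,3}$ rather than directly on monomials, and then observe that this set of isotopy classes is in bijection with $\mathbb{N}^3$; this avoids any ambiguity about ``tracking'' the multicurve through a Reidemeister move. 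With that adjustment the argument goes through, and it matches Przytycki's original approach for skein modules of thickened surfaces.
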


\noindent Note that in \cite{P}, $H_2$ is represented by a twice punctured plane and diagrams in $H_2$ determine a framing of links in $H_2$. The two dots in Figure~\ref{przt} represent the punctures.

\begin{figure}[!ht]
\begin{center}
\includegraphics[width=2.6in]{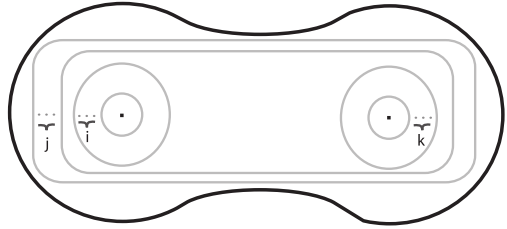}
\end{center}
\caption{The basis of KBSM($H_2$), $B_{H_2}$.}
\label{przt}
\end{figure}

In \cite{B} the author computed the Kauffman bracket skein module of the complement of $(2, 2p+1)$-torus knots diagrammatically. More precisely, the author considered the complement of the $(2, 2p+1)$-torus knots obtained by attaching a 2-handle on $H_2$, and using the technique developed in \cite{HP}, the author studied the effect of this 2-handle to the $KBSM(H_2)$. In particular he identified the relations added to $KBSM(H_2)$ by attaching the 2-handle and he showed that the Kauffman bracket skein module of the complement $(2, 2p+1)$-torus knots is generated by the following set:

\begin{equation}
B_{T_{(2, 2p+1)}^c}\ =\ \left\{(a\, , b),\, |\, b\leq p \right\}\, \cup\, \{U\},
\end{equation}

\noindent where $U$ denotes the unknot and $(a, b)$ denotes the link in $T_{(2, 2p+1)}^c$ consisting of $a$ copies of the curve $x$ and at most one copy of the curve $y$. Elements of this basis are illustrated in Figure~\ref{bul} for the case of the complement of the $(2, 3)$-torus knot, i.e. the complement of the trefoil knot, $Tr^c$.

\begin{figure}[!ht]
\begin{center}
\includegraphics[width=1.4in]{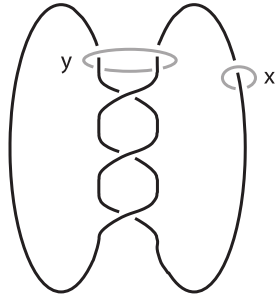}
\end{center}
\caption{Elements in the basis of KBSM($Tr^c$), $B_{Tr^c}$.}
\label{bul}
\end{figure}

\noindent The most challenging and technical part in this method is the identification of the sufficient relations to eliminate elements in $B_{H_2}$ and obtain the basis $B_{T_{(2, 2p+1)}^c}$.

\bigbreak

In this paper we propose an alternative method for the computation of the Kauffman bracket skein module of the complement of $(2, 2p+1)$-torus knots (and arbitrary $3$-manifolds in general) via braids. More precisely, we work on open braided form of links in $T_{(2, 2p+1)}^c$ and we present an alternative basis of $KBSM(T_{(2, 2p+1)}^c)$, which follows from $KBSM(H_2)$ more ``naturally'' in the braid setting. We start by presenting the complement of $(2, 2p+1)$-torus knots by closed braids in $S^3$ and links in $T_{(2, 2p+1)}^c$ by mixed links in $S^3$. Using the technique of parting and combing, we first isotope the links in $T_{(2, 2p+1)}^c$ to links in $H_2$ and we then consider the images of these links in the $KBSM(H_2)$. We relate the $KBSM(T_{(2, 2p+1)}^c)$ to the $KBSM(H_2)$ and in particular and we show that in order to compute $KBSM(T_{(2, 2p+1)}^c)$, it suffices to consider the effect of combing (as well as the Kauffman bracket skein relations) to elements in the basis $B_{H_2}$. We pass through some spanning sets of $KBSM(T_{(2, 2p+1)}^c)$ until we reach at the basis $\mathcal{B}_{T_{(2, 2p+1)}^c}$ of $KBSM(T_{(2, 2p+1)}^c)$.
\smallbreak
It is worth mentioning that the braid approach for computing Kauffman bracket skein modules of 3-manifolds has been successfully applied for the case of the Solid Torus in \cite{D} and for the case of the handlebody of genus 2 in \cite{D1}. Moreover, the braid approach has been applied for the computation of HOMFLYPT skein modules of 3-manifolds (see \cite{DL2, DL3, DL4, DLP, D5}). Finally, the same approach is to be used for the computation of skein modules of other knotted objects, such as tied-links (\cite{D3, D3}), pseudo-links and tied pseudo-links (\cite{D2, D6}). 

\bigbreak

The paper is organized as follows: In \S~\ref{topo} we set up the appropriate topological setting by representing knots and links in the complement of a $(2, 2p+1)$-torus knot by mixed links in $S^3$ and we recall isotopy for mixed links in $S^3$. We then translate isotopy of mixed links to braid equivalence by introducing first the notion of geometric mixed braids. Applying the techniques of parting and combing on geometric mixed braids, we obtain algebraic mixed braids, that is, elements of the mixed braid group $B_{2, n}$. In \S~\ref{h2totc} we relate the Kauffman bracket skein module of the genus 2 handlebody $H_2$ to the Kauffman bracket skein module of the complement of $(2, 2p+1)$-torus knots. More precisely, in \S~\ref{h2kt} we recall the knot theory of $H_2$ and in \S~\ref{kbsmh2} we present some basic sets of $KBSM(H_2)$ via braids. In \S~\ref{sstc} we start from a well known basis of $KBSM(H_2)$ and we study the effect of combing to elements in this basis. We pass through many different spanning sets of $KBSM(T_{(2, 2p+1)}^c)$ until we reach at the spanning set $\mathcal{B}_{T_{(2, 2p+1)}^c}$ in \S~\ref{basTrc}. Finally, in \S~\ref{linind} we prove that elements in $\mathcal{B}_{T_{(2, 2p+1)}^c}$ are linear independent with the use of the notion of total winding, that we also introduce in theis subsection. Moreover, in \S~\ref{trefc} we demonstrate the braid technique for computing Kauffman bracket skein modules of complements of torus knots for the case of the complement of the trefoil knot. Then, in \S~\ref{trefcs} we describe the main ideas of the braid approach toward the computation of the Kauffman bracket skein module of c.c.o. 3-manifolds $M$, obtained  from $S^3$ by integral surgery along the trefoil knot. We start by relating $KBSM(M)$ to $KBSM(Tr^c)$ and we then study the effect of (braid) band moves and combing on elements in $KBSM(Tr^c)$. The (braid) band moves describe isotopy in $M$, but not in $Tr^c$. Finally, we present the idea for showing that $KBSM(M)$ is finitely generated via the braid technique.

\section{Topological Set Up}\label{topo}

In this section we recall the topological setting from \cite{LR1, LR2}, and \cite{DL1}. Throughout the paper, we will denote the complement of $(2, 2p+1)$-torus knots by $T_{(2, 2p+1)}^c\, :=\, S^3\backslash T_{(2, 2p+1)}$, where $T_{(2, 2p+1)}$ denotes a $(2, 2p+1)$-torus knot. Finally, we will be using the case of the complement of the trefoil knot for some illustrations and we will denote that by $Tr^c$.

\subsection{Mixed Links and Isotopy}

$T_{(2, 2p+1)}^c$ is homeomorphic to $S^3\backslash \widehat{B}$, where $\widehat{B}$ is the closure of the braid $B$ which is isotopic to $T_{(2, 2p+1)}$. Consider now a link $L$ in $S^3\backslash \widehat{B}$ and fix $\hat{B}$ pointwise. As explained in \cite{LR1, LR2, DL1}, the link $L$ can be represented by a mixed link $\widehat{B}\cup L$ in $S^3$, that is, a link in $S^3$ consisting of two parts: the {\it fixed part} $\widehat{B}$ that represents $T_{(2, 2p+1)}^c$ and the moving part that represents the link $L$ in $T_{(2, 2p+1)}^c$. A \textit{mixed link diagram }is a diagram $\widehat{B}\cup \widetilde{L}$ of $\widehat{B}\cup L$ on the plane of $\widehat{B}$, where this plane is equipped with the top-to-bottom direction of the braid $B$. See Figure~\ref{mixl} for an example of a mixed link that represents a link in $Tr^c$.

\begin{figure}[!ht]
\begin{center}
\includegraphics[width=1.5in]{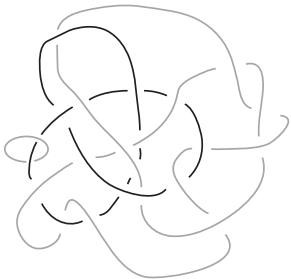}
\end{center}
\caption{A mixed link in $S^3$ representing a link in $Tr^c$.}
\label{mixl}
\end{figure}

Consider now two isotopic links $L_1, L_2$ in $T_{(2, 2p+1)}^c$. Corresponding mixed links if theirs, $\widehat{B}\cup L_1$ and $\widehat{B}\cup L_2$, will differ by a finite sequence of the classical Reidemeister moves for the moving part of the mixed links, and the mixed Reidemeister moves illustrated in Figure~\ref{mr}, that involve the fixed and the moving part of the mixed links.

\begin{figure}[!ht]
\begin{center}
\includegraphics[width=2.8in]{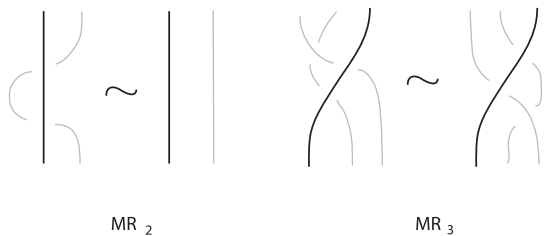}
\end{center}
\caption{Mixed Redimeister moves.}
\label{mr}
\end{figure}

\subsection{Mixed Braids}\label{mixedbr}

In order to translate isotopy of links in $T_{(2, 2p+1)}^c$ into braid equivalence, we need to introduce first the notion of geometric
mixed braids. A \textit{geometric mixed braid} is an element of the group $B_{2+n}$, where 2 strands form the fixed part of the mixed link representing the complement of the $(2, 2p+1)$-torus knot, and $n$ strands form the {\it moving subbraid\/} $\beta$ representing the link $L$ in $T_{(2, 2p+1)}^c$. For an illustration see Figure~\ref{mb} for the case of $Tr^c$. By the Alexander theorem for knots and links in $T_{(2, 2p+1)}^c$ (cf. Thm.~5.4 \cite{LR1}), a mixed link diagram $\widehat{B}\cup \widetilde{L}$ of $\widehat{B}\cup L$ may be turned into a geometric mixed braid $B\cup \beta$ with isotopic closure.

\begin{figure}[!ht]
\begin{center}
\includegraphics[width=0.7in]{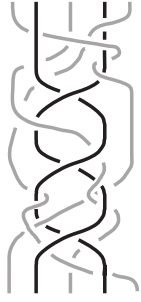}
\end{center}
\caption{A geometric mixed braid in $S^3$.}
\label{mb}
\end{figure}

We now {\it part} the geometric mixed braids in order to separate its strands into two sets: the strands of the fixed subbraid $B$ representing the complement of the $(2, 2p+1)$-torus knot, and the moving strands of the braid representing a link in $T_{(2, 2p+1)}^c$. More precisely:

\begin{defn}\rm
{\it Parting} a geometric mixed braid $B \bigcup\beta$ on $2+n$ strands means to separate its endpoints into two different sets, the first $2$ belonging to the subbraid $B$ and the last $n$ to $\beta$, and so that the resulting braids have isotopic closures. This can be realized by pulling each pair of corresponding moving strands to the right and {\it over\/} or {\it under\/} each  strand of $B$ that lies on their right. We start from the rightmost pair respecting the position of the endpoints. The result of parting is a {\it parted mixed braid}. If the strands are pulled always over the strands of $B$, then this parting is called {\it standard parting}. See the middle illustration of Figure~\ref{partandcomb} for the standard parting of an abstract mixed braid.
\end{defn}

For more details on the technique of parting the reader is referred to \cite{LR2, DL1}. 

\bigbreak

We {\it comb} the parted mixed braid in order to separate the braiding of the fixed subbraid $B$ from the braiding of the moving strands (see Figure~\ref{lo} and the right most illustration of Figure~\ref{partandcomb} for an abstract illustration). After combing a parted mixed braid we obtain an \textit{algebraic mixed braid}. 

\begin{defn}\rm
An \textit{algebraic mixed braid} is a mixed braid on $2+n$ strands such that the first $2$ strands are fixed and form the identity braid on $2$ strands $I_2$ and the next $n$ strands are moving strands and represent a link in $T_{(2, 2p+1)}^c$. The set of all algebraic mixed braids on $2+n$ strands forms a subgroup of $B_{2+n}$, denoted $B_{2,n}$, called the {\it mixed braid group}. 
\end{defn}

The mixed braid group $B_{2,n}$ has been introduced and studied in \cite{La1} (see also \cite{OL}) and it is shown that it has the following presentations:

\begin{center} \label{B}
$
B_{2,n} = \left< \begin{array}{ll}  \begin{array}{l}
t, \tau,  \\
\sigma_1, \ldots ,\sigma_{n-1}  \\
\end{array} &
\left|
\begin{array}{l} \sigma_k \sigma_j=\sigma_j \sigma_k, \ \ |k-j|>1   \\
\sigma_k \sigma_{k+1} \sigma_k = \sigma_{k+1} \sigma_k \sigma_{k+1}, \ \  1 \leq k \leq n-1  \\
t \sigma_k = \sigma_k t, \ \ k \geq 2   \\
\tau \sigma_k = \sigma_k \tau, \ \ k \geq 2    \\
 \tau \sigma_1 \tau \sigma_1 = \sigma_1 \tau \sigma_1 \tau \\
 t \sigma_1 t \sigma_1 = \sigma_1 t \sigma_1 t \\
 \tau (\sigma_1 t {\sigma^{-1}_1}) =  (\sigma_1 t {\sigma^{-1}_1})  \tau
\end{array} \right.  \end{array} \right>,
$
\end{center}

or

\begin{center} \label{B2}
$
B_{2,n} = \left< \begin{array}{ll}  \begin{array}{l}
t, T,  \\
\sigma_1, \ldots ,\sigma_{n-1}  \\
\end{array} &
\left|
\begin{array}{l} \sigma_k \sigma_j=\sigma_j \sigma_k, \ \ |k-j|>1   \\
\sigma_k \sigma_{k+1} \sigma_k = \sigma_{k+1} \sigma_k \sigma_{k+1}, \ \  1 \leq k \leq n-1  \\
t \sigma_k = \sigma_k t, \ \ k \geq 2    \\
T \sigma_k = \sigma_k T, \ \ k \geq 2    \\
 t \sigma_1 T \sigma_1 = \sigma_1 T \sigma_1 t  \\
\end{array} \right.  \end{array} \right>,
$
\end{center}

\noindent where the {\it loop generators} $t, \tau, T$ are as illustrated in Figure~\ref{Loops}.

\begin{figure}[!ht]
\begin{center}
\includegraphics[width=5.2in]{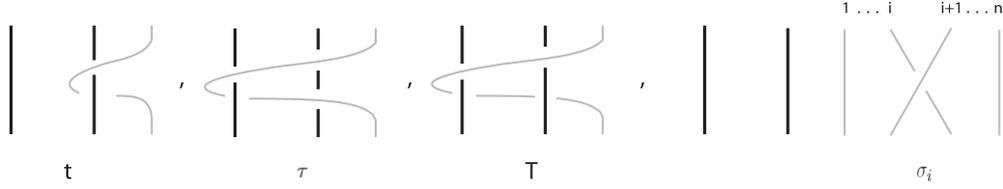}
\end{center}
\caption{ The loop generators $t, \tau$ and $T$ and the braiding generators $\sigma_i$ of $B_{2,n}$. }
\label{Loops}
\end{figure}

Let now $\Sigma_1$ denote the crossing between the $1^{st}$ and the $2^{nd}$ strand of the fixed subbraid. Then, for all $j=1,\ldots,n-1$ we have: $\Sigma_1 \sigma_j = \sigma_j \Sigma_1$. Thus, the only generating elements of the moving part that are affected by combing are the loops $t, \tau$ and $T$. This is illustrated in Figure~\ref{lo}. 

\begin{figure}[!ht]
\begin{center}
\includegraphics[width=5.3in]{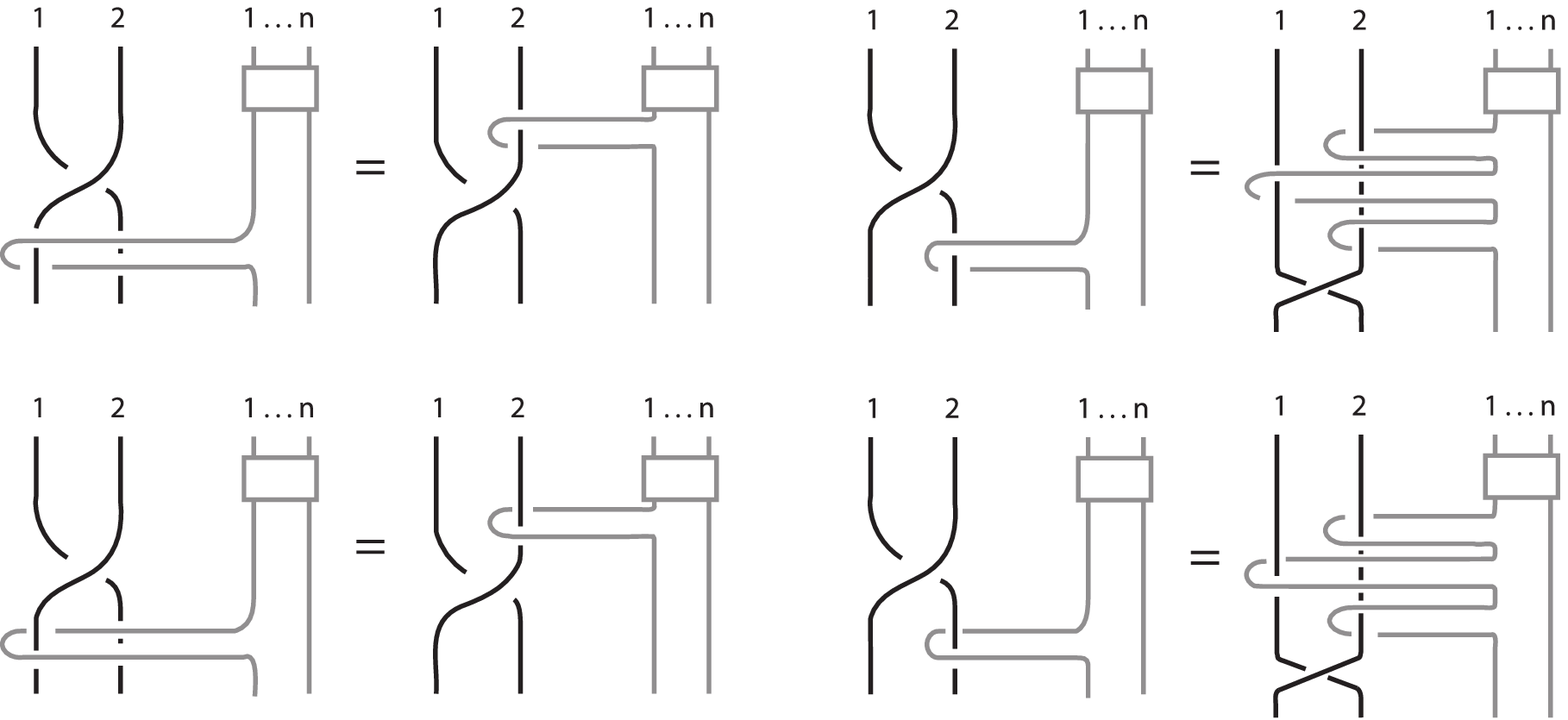}
\end{center}
\caption{Combing.}
\label{lo}
\end{figure}

The effect of combing a parted mixed braid is to separate it into two distinct parts:
the {\it algebraic part} at the top, which has all fixed strands forming the identity braid, so it is an element of the mixed braid group $B_{2,n}$, and which contains all the knotting and linking information of the link $L$ in $T_{(2, 2p+1)}^c$; and
the {\it coset part} at the bottom, which contains only the fixed subbraid $B$ and an identity braid for the moving part (see rightmost illustration in Figure~\ref{partandcomb}).

\begin{figure}[!ht]
\begin{center}
\includegraphics[width=5.3in]{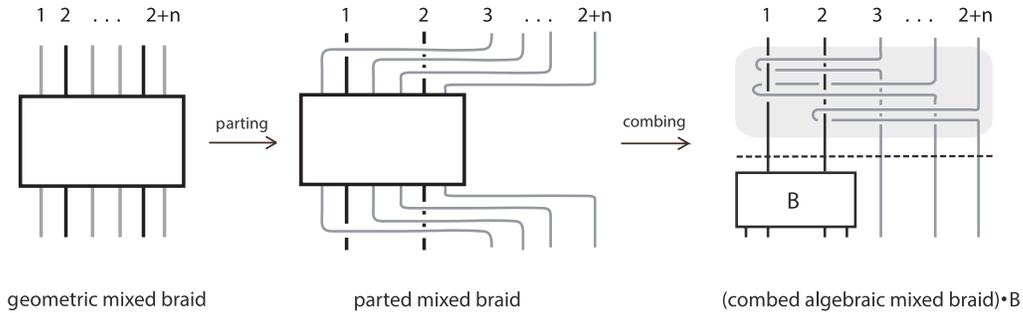}
\end{center}
\caption{Parting and combing a geometric mixed braid.}
\label{partandcomb}
\end{figure}

\begin{remark}\rm
As noted in \cite{La1, DL1}, if we denote the set of parted mixed braids on $n$ moving strands with fixed subbraid $B$ by $C_{2,n}$, then concatenating two elements of $C_{2,n}$ is not a closed operation since it alters the description of the manifold. However, as a result of combing, the set $C_{2,n}$ is a coset of $B_{2,n}$ in $B_{2+n}$ characterized by the fixed subbraid $B$.
\end{remark}

The group $B_{m,n}$ embeds naturally into the group $B_{m,n+1}$. We shall denote

$$B_{2,\infty}:=\bigcup_{n=1}^{\infty }B_{2,n} \quad {\rm and \ similarly} \quad C_{2,\infty}=\bigcup_{n=1}^{\infty }C_{2,n}.$$

\begin{thm}\label{algcco} \
Two  oriented links in  $T_{(2, 2p+1)}^c$ are isotopic if and only if any two corresponding algebraic mixed braid representatives in  $B_{2,\infty}$  differ by a finite sequence of the following moves:

\[
\begin{array}{llll}
1. & Stabilization\ moves: & {\alpha} {\beta}\, \sim\, \alpha\, {\sigma^{\pm 1}_n}\, {\beta}, & {\rm for}\ \alpha, \beta  \in B_{2,n},\\
&&&\\
2. & Conjugation: & \alpha\, \sim\, {\sigma^{\pm 1}_j} \alpha {\sigma^{\mp 1}_j}, & {\rm for}\ \alpha, \sigma_j  \in B_{2,n},\\
&&&\\
3. & Combed\ loop\ conjugation: & \alpha\,  \sim  {\rho^{\mp 1}}\, \alpha \, comb(\rho^{\pm 1}), & {\rm for} \ \alpha \in B_{2,n},\\
\end{array}
\]
\noindent, where $\rho$ denotes the loops $t,\, \tau$, or $T$, and $comb(\rho)$ denotes the combing of the loop $\rho$ through $B=\Sigma_1^{2p+1}$. In particular:
\[
\begin{array}{lcl}
comb(t)\ =\ T^{-p}\, t^{-1}\, T^{p+1} & , & comb(t^{-1})\ =\ T^{-p-1}\, t\, T^{p}\\
&&\\
comb(\tau)\ =\ T^{-p}\, t\, T^p & , & comb(\tau^{-1})\ =\ T^{-p}\, t^{-1}\, T^p\\
&&\\
comb(T^{\pm 1})\ =\ T^{\pm 1} &&\\
\end{array}
\]
\end{thm}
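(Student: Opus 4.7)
The plan is to bootstrap from the already-established braid equivalence for \emph{geometric} mixed braids in a knot complement and then translate the moves into the algebraic setting via parting and combing. The analogue of the statement for geometric mixed braids is the Markov-type theorem of Lambropoulou--Rourke (see \cite{LR1, LR2, DL1}): two oriented links in $T_{(2, 2p+1)}^c$ are isotopic if and only if any two geometric mixed braid representatives differ by braid isotopy, stabilization moves, conjugation, and loop conjugation by a loop $\rho$ around the fixed subbraid $\widehat{B}$. So the entire task is to check what each of these three families becomes after we part and comb.

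First I would apply the parting procedure of \S\ref{mixedbr} to each geometric mixed braid representative and then comb the result through $B=\Sigma_1^{2p+1}$ so as to separate the algebraic part (an element of $B_{2,n}$) from the fixed coset part. Parting is a sequence of mixed Reidemeister moves that does not alter closure, and combing is carried out by rewriting occurrences of $t,\tau,T$ past the crossings of $B$ using the defining relations of $B_{2,n}$ displayed above. The stabilization move $\alpha\beta \sim \alpha \sigma_n^{\pm 1}\beta$ and the conjugation move $\alpha \sim \sigma_j^{\pm 1}\alpha \sigma_j^{\mp 1}$ in the geometric picture involve only the moving strands and commute with parting and combing, so they translate verbatim into moves (1) and (2) of the statement.

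The nontrivial translation is for geometric loop conjugation. Here one inserts a loop $\rho \in \{t,\tau,T\}^{\pm 1}$ on one side of the moving subbraid and its inverse on the other, i.e.\ $\alpha \sim \rho^{-1}\alpha\rho$. After parting, one copy of $\rho$ lies above $B$ inside the algebraic block while the other sits below $B$; combing the lower $\rho$ upward through $B=\Sigma_1^{2p+1}$ turns it into $comb(\rho)$, producing exactly move (3) of the statement, $\alpha \sim \rho^{-1}\alpha\cdot comb(\rho)$. It remains to verify the explicit combing formulas. This is done by induction on $2p+1$: $T$ is a loop around both fixed strands so it commutes with each $\Sigma_1$, giving $comb(T^{\pm 1})=T^{\pm 1}$; for $t$ and $\tau$ each single $\Sigma_1$ swaps $t\leftrightarrow \tau$ up to a conjugating power of $T$, coming directly from the relations $t\sigma_1 T \sigma_1 = \sigma_1 T \sigma_1 t$ (together with $T=\sigma_1 t \sigma_1^{-1}$ relating the two presentations). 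Iterating this swap an odd number of times $2p+1$ and keeping careful track of the $T$-exponents yields $comb(t)=T^{-p}t^{-1}T^{p+1}$ and $comb(\tau)=T^{-p}tT^p$, with the signed versions following by inverting the argument.

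The main obstacle is precisely this last bookkeeping: one needs to push $t$ through each of the $2p+1$ crossings of $B$ without losing track of the accumulated $T$-factors, and to check that the odd power of $\Sigma_1$ is what produces the asymmetry $T^{-p}\cdots T^{p+1}$ (as opposed to the symmetric form $T^{-p}\cdots T^{p}$ that appears for $\tau$). Once one verifies the base case $p=0$ (a single $\Sigma_1$) by a direct diagrammatic check using Figure~\ref{lo}, the induction step is essentially mechanical, as $T$ commutes with $\Sigma_1$ and therefore each additional pair of $\Sigma_1$'s simply shifts a $t$ back to its original type while contributing a single $T^{-1}\cdots T$ conjugation. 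Combining this with the geometric Markov theorem completes the proof.
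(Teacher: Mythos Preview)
The paper does not supply a proof of this theorem; it is stated in \S\ref{topo} as background recalled from \cite{LR1, LR2, DL1, La1}. Your overall strategy---start from the geometric Markov theorem for mixed braids and then part and comb to obtain the algebraic version---is exactly the route taken in those references, so at the level of architecture your proposal is correct and matches the literature the paper is citing.

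There is, however, a genuine confusion in your derivation of the explicit combing formulas. Combing is the operation of pushing a loop generator through the \emph{fixed}-strand crossing $\Sigma_1$, and $\Sigma_1$ is not an element of $B_{2,n}$; the defining relations of $B_{2,n}$ (which involve only the moving-strand crossings $\sigma_i$) therefore cannot by themselves produce the combing identities. In particular the relation $t\sigma_1 T\sigma_1=\sigma_1 T\sigma_1 t$ you invoke concerns moving strands and says nothing about how $t$ slides past $\Sigma_1$. The formulas $comb(t)=T^{-p}t^{-1}T^{p+1}$, etc., are obtained instead by the geometric isotopy indicated in Figure~\ref{lo}: one reads off what happens to each loop as it passes a single $\Sigma_1$, and then iterates. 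Your inductive scheme on the number of $\Sigma_1$'s is the right mechanism, but the base case must be justified diagrammatically (as in Figure~\ref{lo}), not algebraically from the $B_{2,n}$ presentation. Relatedly, the identity $T=\sigma_1 t\sigma_1^{-1}$ is incorrect: that expression is $t_1'$ in the paper's notation (Definition~\ref{loopings}), a loop of the second moving strand around the first fixed strand. The correct relation linking the two presentations is $T=\tau t$ (equivalently $\tau=Tt^{-1}$), as the paper uses explicitly in the proof of Lemma~\ref{clclem} and in the total-winding computation in \S\ref{linind}. Once you replace the algebraic justification of the base case by the geometric one and correct the $T$/$\tau$/$t$ relation, your argument goes through.
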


\section{Relating $KBSM(H_2)$ to $KBSM(T_{(2, 2p+1)}^c)$}\label{h2totc}

In this section we recall the knot theory of $H_2$ that is crucial for this paper and we establish the relation between $KBSM(H_2)$ and $KBSM(T_{(2, 2p+1)}^c)$, the Kauffman bracket skein module of the complement of $(2, 2p+1)$-torus knots.

\subsection{The knot theory of $H_2$}\label{h2kt}

We consider $H_2$ to be $S^3\backslash \{{\rm open\ tubular\ neighborhood\ of}\ I_2 \}$, where $I_2$ denotes the point-wise fixed identity braid on $2$ indefinitely extended strands meeting at the point at infinity. Thus, $H_2$ may be represented in $S^3$ by the braid $I_2$ (for an illustration see Figure~\ref{h2}).

\begin{figure}[ht]
\begin{center}
\includegraphics[width=2.1in]{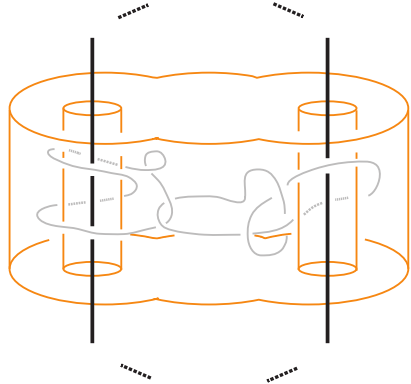}
\end{center}
\caption{A link in $H_2$.}
\label{h2}
\end{figure}

The main difference between a geometric mixed braid in $H_2$ and a geometric mixed braid in $T_{(2, 2p+1)}^c$ is the {\it closing} operation that is used to obtain mixed links from geometric mixed braids. In the case of $T_{(2, 2p+1)}^c$, the closing operation is defined in the usual sense (as in the case of classical braids in $S^3$), while in the case of $H_2$ the strands of the fixed part $I_2$ do not participate in the closure operation. In particular, the closure operation of a geometric mixed braid $I_2 \cup \beta$  in $H_2$ is realized by joining each pair of corresponding endpoints of the moving part by a vertical segment, either over or under the rest of the braid and according to the label attached to these endpoints. For more details the reader is referred to \cite{La1, OL, D3}.

\smallbreak

As explained in \cite{OL}, braid equivalence in $H_2$ is translated on the level of mixed braids in $S^3$ by a finite sequence of moves 1 and 2 in Theorem~\ref{algcco}, that is, stabilization moves and conjugation. Note also that $T$-loop conjugations in $H_2$, that is, $\alpha\ \sim\ T^{\pm 1}\ \alpha\ T^{\pm 1},\ \alpha \in B_{2, \infty}$, are allowed and that in \S~5 \cite{OL} it is shown how these conjugations can be translated in terms of relations (1) and (2) of Theorem~\ref{algcco}. Thus, the difference between braid equivalence in $H_2$ from braid equivalence in $T_{(2, 2p+1)}^c$, lies in the $t$ and $\tau$-combed loop conjugations, that is, the following moves correspond to braid equivalence in $T_{(2, 2p+1)}^c$ but not in $H_2$:

\begin{equation}\label{clceq}
\begin{array}{lclclcl}
\alpha & \sim & t^{-1}\, \alpha\, T^{-p}\, t^{-1}\, T^{p+1} & , & \alpha & \sim & t\, \alpha\, T^{-p-1}\, t\, T^p\\
&&&&&&\\
\alpha & \sim & \tau^{-1}\, \alpha\, T^{-p}\, t\, T^p & , & \alpha & \sim & \tau\, \alpha\, T^{-p}\, t^{-1}\, T^p\\
\end{array}
\end{equation}

Hence, in order to compute the Kauffman bracket skein module of the complement of $(2, 2p+1)$-torus knots, we need to consider the Kauffman bracket skein module of the genus 2 handlebody and study the effect of relations~(\ref{clceq}), i.e. combed loop conjugations, to elements in the basis of $KBSM(H_2)$. With a little abuse of notation, we denote this as follows:
\begin{equation}
KBSM(T_{(2, 2p+1)}^c)\ =\ \frac{KBSM(H_2)}{<combed\ loop\ conjugations>}.
\end{equation}

Our starting point is that a basis of $KBSM(H_2)$ spans $KBSM(T_{(2, 2p+1)}^c)$.

\subsection{The Kauffman bracket skein module of the genus 2 handlebody}\label{kbsmh2}

Recall now Theorem~\ref{tprz} for the basis of the Kauffman bracket skein module of $H_2$ and Figure~\ref{przt} for an illustration of elements in the basis. In order to present elements in the basis in open braid form, we need the following:

\begin{defn}\label{loopings}\rm
We define the following elements in $B_{2, n}$:
\begin{equation}\label{algloops}
t_i^{\prime}\ :=\ \sigma_i\ldots \sigma_1\, t\, \sigma^{-1}_1 \ldots \sigma^{-1}_i,\ \ \tau_k^{\prime}\ :=\ \sigma_k\ldots \sigma_1\, \tau\, \sigma^{-1}_1 \ldots \sigma^{-1}_k,\ \ T_j^{\prime}\ :=\ \sigma_j\ldots \sigma_1\, T\, \sigma^{-1}_1 \ldots \sigma^{-1}_j
\end{equation}
\end{defn}

Note that the $t_i^{\prime}$'s, the $\tau_k^{\prime}$'s and the $T_j^{\prime}$'s are conjugate but not commuting. In order to simplify the algebraic expressions obtained throughout this paper, we recall the following notation from \cite{D1}:

\begin{nt}\label{nt1}\rm
For $i, j\in \mathbb{N}$ such that $i<j$, we set:

\[
t^{a_{i,j}}_{i, j} \ := \ {t_i^{\prime}}^{a_i}\, {t_{i+1}^{\prime}}^{a_{i+1}}\, \ldots\, {t_j^{\prime}}^{a_j},\quad \tau^{b_{i, j}}_{i, j} \ := \ {\tau_i^{\prime}}^{b_i}\, {\tau_{i+1}^{\prime}}^{b_{i+1}}\, \ldots \, {\tau_j^{\prime}}^{b_j},\quad T^{c_{i, j}}_{i, j} \ := \ {T_i^{\prime}}^{c_i}\, {T_{i+1}^{\prime}}^{c_{i+1}}\, \ldots\, {T_j^{\prime}}^{c_j},
\]

\noindent where $a_k$, $b_k$ and $c_k \in \mathbb{N}, \forall k$, and

\begin{equation}
t_{i, j} \ := \ t_i^{\prime}\, t_{i+1}^{\prime}\, \ldots\, t_j^{\prime},\quad \tau_{i, j} \ := \ \tau_i^{\prime}\, \tau_{i+1}^{\prime}\, \ldots \, \tau_j^{\prime},\quad T_{i, j} \ := \ T_i^{\prime}\, T_{i+1}^{\prime}\, \ldots\, T_j^{\prime}.
\end{equation}

\noindent We also set:
$$t^{a_i, a_j}_{i, j}\, \tau^{b_{k, l}}_{k, l}\, T^{c_{m, n}}_{m, n}\, :=\, {t_i^{\prime}}^{a_i}\, \ldots \, {t_j^{\prime}}^{a_j}\, {\tau_{k}^{\prime}}^{b_{k}}\, \ldots \, {\tau_l^{\prime}}^{b_l}\, {T_{m}^{\prime}}^{c_{m}}\, \ldots\, {T_n^{\prime}}^{c_n},$$
\noindent where $i, j, k, l, m, n\in \mathbb{N}$ such that $i<j<k<l<m<n$.

\smallbreak

\noindent Finally, we note that ${t_i^{\prime}}^0\, {\tau_k^{\prime}}^0\, {T_j^{\prime}}^0$ represents the unknot.
\end{nt}

\smallbreak

We observe now that an element $x^iy^jz^k$ in the basis of $KBSM(H_2)$ described in Theorem~\ref{tprz} can be illustrated equivalently as a mixed link in $S^3$ so that we correspond the element $x^iy^jz^k$ to the minimal mixed braid representation in which we group the twists around each fixed strand. Figure~\ref{prbr} illustrates this correspondence. 

\begin{figure}[!ht]
\begin{center}
\includegraphics[width=4.1in]{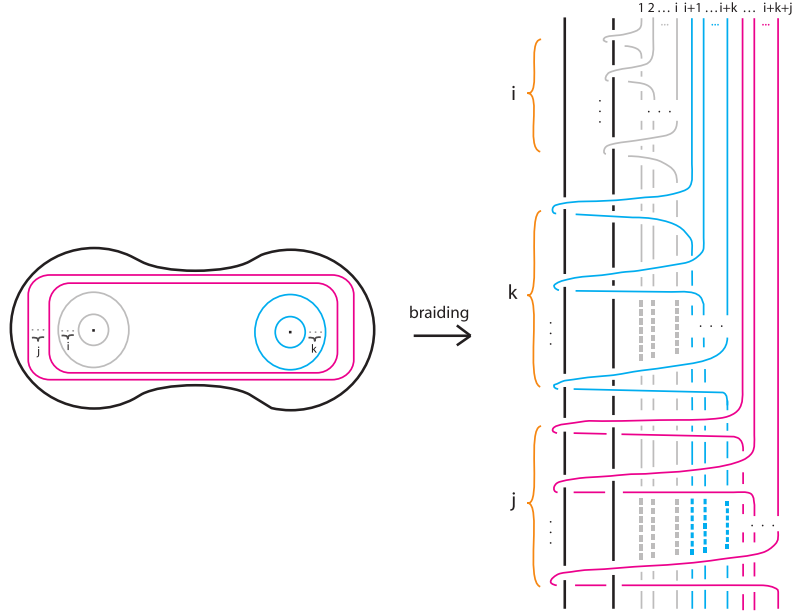}
\end{center}
\caption{Elements in the $B_{H_2}$ basis in braid form. }
\label{prbr}
\end{figure}

Then, the set

$$B_{H_2}\, :=\, \{(tt^{\prime}_1\ldots t^{\prime}_i)\, (\tau^{\prime}_{i+1}\tau^{\prime}_{i+2}\ldots \tau^{\prime}_{i+k})\, (T^{\prime}_{i+k+1}T^{\prime}_{i+k+2}\ldots T^{\prime}_{i+k+j})\}, \ (i, k, j)\in \mathbb{N}\times \mathbb{N}\times \mathbb{N}$$

\noindent is a basis of KBSM($H_2$) in braid form. Equivalently, using Notation~\ref{nt1} we have that

$$B_{H_2}\ :=\ \{t_{0, i}\, \tau_{i+1, i+k}\, T_{i+k+1, i+k+j} \}, \ (i, k, j)\in \mathbb{N}\times \mathbb{N}\times \mathbb{N}.$$

Note that when expressing an element in $B_{H_2}$ in open braid form, we obtain three different sets of strands: one set consists of the loopings $t^{\prime}_i$'s, the other set consists of the loopings $\tau^{\prime}_k$'s and the last one consists of the loopings $T^{\prime}_j$'s.

\bigbreak

Finally, it is worth mentioning that in \cite{D1}, two alternative bases for $KBSM(H_2)$ are presented in terms of braids (see Figure~\ref{basesall}). In particular, it is proved that the following sets forms bases for $KBSM(H_2)$:
\begin{equation}\label{nbasis1}
B^{\prime}_{H_2}\ =\ \{t^i\, {\tau^{\prime}_1}^k\, {T^{\prime}_2}^j,\ i, j, k \in \mathbb{N} \}.
\end{equation}
\begin{equation}\label{nbasis2}
\mathcal{B}_{H_2}\ =\ \{t^i\, \tau^k\, T^j,\ i, j, k \in \mathbb{N} \}.
\end{equation}

\begin{figure}[!ht]
\begin{center}
\includegraphics[width=4.8in]{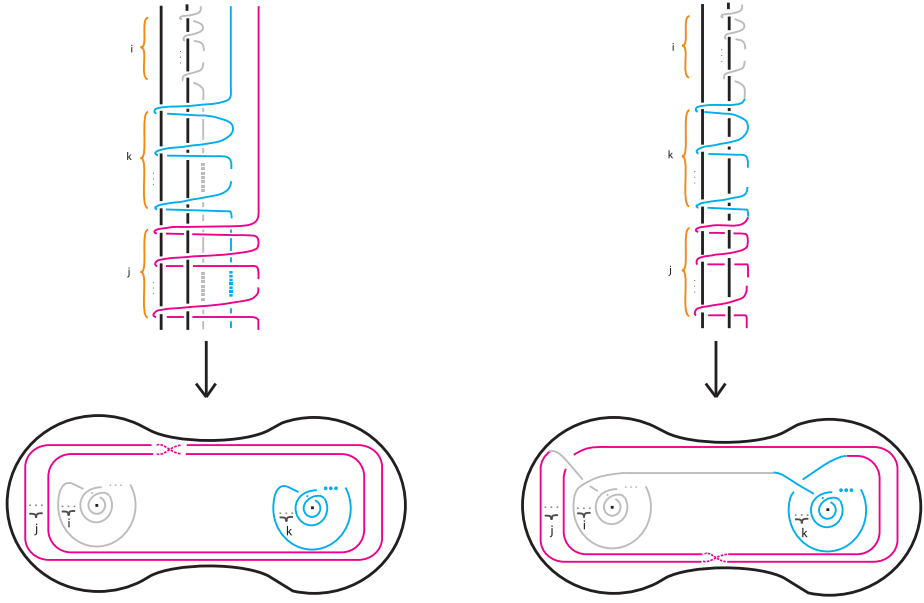}
\end{center}
\caption{Elements in the $B^{\prime}_{H_2}$ and $\mathcal{B}_{H_2}$ bases.}
\label{basesall}
\end{figure}

\noindent These bases are ``closer'' to the basis that we obtain for $KBSM(T_{(2, 2p+1)}^c)$ in \S~\ref{basTrc}.

\section{Spanning sets of $KBSM(T_{(2, 2p+1)}^c)$}\label{sstc}

In this section we study the effect of combed loop conjugations to elements in the basis of $KBSM(H_2)$. This is done in three steps. We first analyze relations~(\ref{clceq}) for the $t$ and $\tau$ loop generators and we conclude that the only effect that these relations have on elements in $B_{H_2}$, is that $t_i^{\prime}\, \sim \tau_i^{\prime}$, for all $i$. Thus, we drop the $\tau^{\prime}_i$'s and we have that monomials in the $t_i^{\prime}$'s and the $T^{\prime}_i$'s loop generators span $KBSM(T_{(2, 2p+1)}^c)$. We then deal with the $T$-generators and with the use of combed loop conjugations and skein relations, we obtain various spanning sets of $KBSM(T_{(2, 2p+1)}^c)$. Each spanning set we obtain consists of elements which are ``closer'' to the form of the elements of the (new) basis of $KBSM(T_{(2, 2p+1)}^c)$, that is presented in section \S~\ref{basTrc}.

\subsection{Dealing with the $t$ and the $\tau$ loop generators}

As mentioned in the introduction of this section, we first deal with the $t$'s and $\tau$'s loop generators by applying combed loop conjugations. In particular, we show that the $\tau_i$'s loop generators may be replaced by the $t_i$'s loopings, since both are meridians in $T_{(2, 2p+1)}^c$. We prove this fact using the technique of combing. 

\smallbreak

Note that we will be using the symbol ``\ $\simeq$\ '' when braid equivalence is involved and the symbol ``\ $\widehat{=}$\ '' when isotopy is involved in the closure (conjugation). We will pass from braids to links and vice-versa in $T_{(2, 2p+1)}^c$ in order to take advantage of the fact that the looping generators are conjugates. Finally, when both braid equivalence and isotopy in the closure is considered, we will use the symbol ``\ $\widehat{\simeq}$\ ''. 

\bigbreak

\begin{lemma}\label{taulem}
The following holds in the $KBSM(T_{(2, 2p+1)}^c)$.
\[\tau_i^{\prime}\ \widehat{\simeq}\ t_i^{\prime},\, \forall i.
\]
\end{lemma}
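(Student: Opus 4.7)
The plan is to reduce the statement to the base case $i=0$, namely $\tau \widehat{\simeq} t$, and then assemble the equivalence from one application of a combed loop conjugation together with several $T$-loop conjugations.

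For the reduction, I would observe that $t_i^{\prime}$ and $\tau_i^{\prime}$ are obtained from $t$ and $\tau$ respectively by the same braid conjugation $\sigma_i \cdots \sigma_1\,(-)\,\sigma_1^{-1}\cdots \sigma_i^{-1}$, cf.\ Definition~\ref{loopings}. Iterating the standard conjugation move (move~2 of Theorem~\ref{algcco}) $i+1$ times shows that $t_i^{\prime}\widehat{\simeq} t$ and $\tau_i^{\prime}\widehat{\simeq} \tau$ in $KBSM(T_{(2,2p+1)}^c)$. Hence transitivity of $\widehat{\simeq}$ reduces the lemma to proving $\tau \widehat{\simeq} t$.

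For the base case, I would apply the combed loop conjugation listed in (\ref{clceq}), specifically $\alpha \sim \tau^{-1}\,\alpha\,T^{-p}tT^{p}$, to $\alpha = \tau$. The $\tau^{-1}\tau$ at the front cancels and we obtain $\tau \widehat{\simeq} T^{-p}\,t\,T^{p}$. At this point I would invoke the $T$-loop conjugation $\alpha \sim T^{\pm 1}\alpha T^{\mp 1}$, which, as noted in the paragraph preceding equation~(\ref{clceq}), is already derivable from moves~1 and 2 of Theorem~\ref{algcco} in $H_2$ (and therefore certainly holds in $T_{(2,2p+1)}^c$). Applying it $p$ times, each application reduces the exponent: $T^{-k}tT^{k} \widehat{\simeq} T\cdot T^{-k}tT^{k}\cdot T^{-1} = T^{-(k-1)}tT^{k-1}$, and after $p$ steps the flanking loops are gone, yielding $T^{-p}tT^{p}\widehat{\simeq} t$. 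Chaining the two equivalences gives $\tau\widehat{\simeq} T^{-p}tT^{p}\widehat{\simeq} t$, as desired.

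The main delicate point I anticipate is bookkeeping for the $T$-loop conjugations: one must make sure that each intermediate expression $T^{-k}tT^{k}$ is interpreted inside a large enough mixed braid group $B_{2,n}$ so that the formal manipulation $T\cdot T^{-k}tT^{k}\cdot T^{-1}=T^{-(k-1)}tT^{k-1}$ is literally a valid identity in $B_{2,n}$ (the loops $t$ and $T$ sit on the same moving strand, so they do not interact with extra $\sigma_j$'s, which keeps the computation purely formal). Everything else is routine manipulation, and the statement is topologically transparent: both $t$ and $\tau$ are meridians of the connected knot $T_{(2,2p+1)}$, so they are necessarily isotopic in the complement; the proof simply realises this isotopy through the allowed braid moves.
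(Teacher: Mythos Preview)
Your proposal is correct and takes essentially the same approach as the paper: apply the combed loop conjugation for $\tau$ to obtain $T^{-p}tT^{p}$, then strip the flanking $T$'s via conjugation in the closure. The only cosmetic difference is that the paper works directly with the indexed generators $\tau_i', t_i', T_i'$ (implicitly extending the combing formula through the $\sigma$-conjugation) and collapses $(T_i')^{-p}t_i'(T_i')^{p}$ to $t_i'$ in one step under $\widehat{\simeq}$, whereas you first conjugate down to the base loops and then peel off the $T$'s one at a time; either way the argument is a one-liner.
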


\begin{proof}
\[
\tau_i^{\prime}\ \sim\ {\tau_i^{-1}}^{\prime}\, \tau_i^{\prime}\, comb\left(\tau_i^{\prime}\right)\ \sim\ comb\left(\tau_i^{\prime}\right)\ \overset{Rel.~(\ref{clceq})}{=}\ {T_i^{\prime}}^{-p}\, t_i^{\prime}\, {T_i^{\prime}}^p\ \widehat{\simeq}\ t_i^{\prime}.
\]
\end{proof}

Note that Lemma~\ref{taulem} can be applied to all $\tau_i^{\prime}$'s on elements in $B_{H_2}$ since the looping generators are conjugates. Thus, we may first apply Lemma~\ref{taulem} on each looping $\tau_i^{\prime}$, then consider the closure of the resulting mixed braid, treat each component separately, and finally consider the result in open braid form. This leads to the following result:

\begin{prop}\label{tautot}
The set $\mathcal{T}_1\ =\ \{t_{0, n}^{\prime}\, T_{n+1, n+m}^{\prime}\ |\ n, m\in \mathbb{N}\cup\{0\} \}$ spans $KBSM(T_{(2, 2p+1)}^c)$.
\end{prop}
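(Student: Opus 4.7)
\medskip

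\noindent\textbf{Proof plan.} The plan is to start from the spanning set $B_{H_2} = \{t_{0,i}\,\tau_{i+1,i+k}\,T_{i+k+1,i+k+j}\mid(i,k,j)\in\mathbb{N}^3\}$ of $KBSM(T_{(2,2p+1)}^c)$ (which spans because $B_{H_2}$ is a basis of $KBSM(H_2)$, and the quotient by combed loop conjugations can only shrink the spanning set) and eliminate every $\tau^{\prime}$-factor using Lemma~\ref{taulem}, reducing each generic monomial to an element of $\mathcal{T}_1$.

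First I would fix an element $w = t_{0,i}\,\tau_{i+1,i+k}\,T_{i+k+1,i+k+j}$ of $B_{H_2}$. My goal is to show $w$ is equivalent modulo the Kauffman bracket skein relations and combed loop conjugations to $t_{0,n}^{\prime}\,T_{n+1,n+m}^{\prime}$ with $n = i+k$ and $m = j$. To do this I would pass to the mixed link closure $\widehat{w}$ in $T_{(2,2p+1)}^c$. Because each looping generator $t_k^{\prime},\tau_k^{\prime},T_k^{\prime}$ corresponds, after closure, to a simple closed curve whose only interaction with the remaining moving strands is through its conjugating braid word $\sigma_k\cdots\sigma_1$, the closure consists of disjoint loop components each encircling one of the two fixed strands once. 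In particular, every $\tau^{\prime}$-component can be treated independently of the others.

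Next I would apply Lemma~\ref{taulem} component-wise, replacing each $\tau_k^{\prime}$-loop in $\widehat{w}$ by the corresponding $t_k^{\prime}$-loop. The lemma was proved for a single $\tau$-loop, but since the generators at index $k$ are obtained from the basic ones by the same conjugation $\sigma_k\cdots\sigma_1\,(\cdot)\,\sigma_1^{-1}\cdots\sigma_k^{-1}$, the same chain of a combed loop conjugation followed by $T^{\prime}_k$-conjugation (which is cyclic on the closed component and so trivial on $\widehat{w}$) carries $\tau_k^{\prime}$ to $t_k^{\prime}$ inside the complement. Re-opening the resulting mixed link in braid form yields
\[
t_{0,i}\,t_{i+1,i+k}\,T_{i+k+1,i+k+j} \;=\; t_{0,i+k}\,T_{i+k+1,i+k+j},
\]
which is exactly an element of $\mathcal{T}_1$.

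The main obstacle I expect is rigorously justifying the ``component-wise'' application of Lemma~\ref{taulem}: one must argue that performing the combed loop conjugation that realizes $\tau_k^{\prime} \widehat{\simeq} t_k^{\prime}$ on the $k$-th strand does not create linking with the $t_j^{\prime}$- and $T_l^{\prime}$-strands, and that the leftover ${T_k^{\prime}}^{\pm p}$ factors cancel cyclically in the closure without interference from the other components. This reduces to checking that, after parting and combing, the loop generator $T_k^{\prime}$ commutes past any $t_j^{\prime}$ or $T_l^{\prime}$ strand lying to its right on its own closed component — a fact that follows from the structure of the closure in $T_{(2,2p+1)}^c$, where each moving strand closes independently of its neighbors. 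Once this localization is secured, the remaining argument is a purely notational rewriting using Notation~\ref{nt1}.
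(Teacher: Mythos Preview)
Your proposal is correct and follows essentially the same approach as the paper: start from the spanning set $B_{H_2}$, pass to the closure so that each looping component can be treated separately, apply Lemma~\ref{taulem} to each $\tau_k^{\prime}$-loop (using that the looping generators are conjugates), and then reopen in braid form to land in $\mathcal{T}_1$. The paper states exactly this argument in the paragraph preceding the proposition, and your discussion of the ``component-wise'' localization obstacle is a more explicit version of the paper's brief remark that the looping generators are conjugates and can be treated independently in the closure.
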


\begin{remark}\rm
Obviously, one may consider elements of the form
\[
\tau_{0, n}^{\prime}\, T_{n+1, n+m}^{\prime}
\]
\noindent to generate $KBSM(T_{(2, 2p+1)}^c)$.
\end{remark}

We now prove that combed loop conjugation on an element $w$ in $B_{H_2}$ may only result to an element in $\mathcal{T}_1$.

\begin{lemma}\label{clclem}
Combed loop conjugations on any looping generator, either has the effect of Lemma~\ref{taulem}, or has no effect at all. 
\end{lemma}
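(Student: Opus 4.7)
The plan is to reduce the lemma by case analysis on the loop $\rho \in \{t, \tau, T\}$ used for the combed loop conjugation $\alpha \sim \rho^{-1}\alpha\cdot comb(\rho)$ and on the type of looping generator ($t_i'$, $\tau_i'$, or $T_i'$) playing the role of $\alpha$. This yields nine sub-cases, each to be resolved by a short direct computation. Throughout I will freely use braid equivalence in $B_{2,n}$, cyclic conjugation in the closure, and $T$-loop conjugation (which is already available in $H_2$ by \S~\ref{h2kt}).

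First I would dispose of the case $\rho = T$: since $comb(T^{\pm 1}) = T^{\pm 1}$ by Theorem~\ref{algcco}, the move collapses to $\alpha \sim T^{-1}\alpha T$. This is ordinary $T$-loop conjugation, which is a valid isotopy already in $H_2$ and hence contributes no new relation in $KBSM(T_{(2, 2p+1)}^c)$. This settles three of the nine sub-cases at once.

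Next, for $\rho \in \{t, \tau\}$ I would substitute $comb(t) = T^{-p}t^{-1}T^{p+1}$ and $comb(\tau) = T^{-p}tT^p$ and simplify the resulting word. The available moves are the commutations $\rho \sigma_k = \sigma_k \rho$ for $k\geq 2$ (which let the inserted loop slide past the $\sigma$-conjugating factors appearing in the definition of $t_i', \tau_i', T_i'$), cyclic conjugation under the closure, and absorption of the $T^{\pm p}$ factors flanking $comb(\rho)$ via $T$-loop conjugation. The combination $(\rho,\alpha) = (\tau, \tau_i')$ is precisely Lemma~\ref{taulem}. In every other combination I expect the inserted $\rho^{-1}$ on the left and the single $t^{\pm 1}$ inside $comb(\rho)$ on the right to be brought together by cyclic conjugation and to cancel, returning $\alpha$ unchanged.

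The main obstacle will be the mixed sub-cases in which the conjugating loop $\rho$ and the looping generator $\alpha$ are of different types, for instance applying a $t$-conjugation to $T_i'$ or a $\tau$-conjugation to $t_i'$. Here the mixed braid relations $\tau\sigma_1\tau\sigma_1 = \sigma_1\tau\sigma_1\tau$, $t\sigma_1 t\sigma_1 = \sigma_1 t\sigma_1 t$, and $\tau(\sigma_1 t\sigma_1^{-1}) = (\sigma_1 t\sigma_1^{-1})\tau$ must be combined to commute one loop past the other. The delicate bookkeeping is to track the exponents of $T$ that are produced during the absorption step and verify that nothing beyond the substitution $\tau_i' \leftrightarrow t_i'$ survives; once each mixed commutation is carried out correctly, the cancellations of the previous paragraph close out every remaining sub-case.
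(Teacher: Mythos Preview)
Your case-analysis framework matches the paper's, but the predicted outcomes and the proposed cancellation mechanism are both wrong. Take the sub-case $(\rho,\alpha)=(t,t_i')$: you have $\rho^{-1}=t^{-1}$ on the left, while $comb(t)=T^{-p}t^{-1}T^{p+1}$ contributes another $t^{-1}$ on the right. These have the \emph{same} sign and cannot cancel; what actually happens (paper, case~i) is
\[
t_i' \ \sim\ comb(t_i')\ =\ T_i'^{-p}\,(t_i')^{-1}\,T_i'^{p+1}\ \widehat{=}\ (t_i')^{-1}T_i'\ \widehat{=}\ T_i'(t_i')^{-1}\ =\ \tau_i',
\]
so this case yields the effect of Lemma~\ref{taulem}, not ``$\alpha$ unchanged''. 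The same is true for $(\tau,t_i')$ (paper, case~ii). In short, the only combination giving Lemma~\ref{taulem}'s effect is not just $(\tau,\tau_i')$; several others do too, and your cancel-the-two-loops heuristic does not detect this.

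More seriously, you miss the ingredient that makes the ``no effect'' cases go through. For $(\rho,\alpha)=(t,T_i')$ the direct computation gives
\[
T_i' \ \sim\ t_i'\,T_i'^{-p}\,t_i'\,T_i'^{p},
\]
which does \emph{not} simplify to $T_i'$ by cyclic and $T$-conjugation alone. The paper's trick (case~iv) is to rewrite the leading $t_i'T_i'^{-1}$ as $(\tau_i')^{-1}$ using the identity $\tau_i'=T_i'(t_i')^{-1}$, and then apply a \emph{second} combed loop conjugation, this time by $\tau_i'$, to finish. So the argument is not a single pass of ``substitute and absorb'' but may require iterating combed loop conjugations of different types, together with the relation $\tau_i'=T_i'(t_i')^{-1}$. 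Your proposal does not contain this idea, and without it the mixed sub-cases you flag as ``delicate bookkeeping'' will not close.
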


\begin{proof}
We only prove Lemma~\ref{clclem} for some cases. All other cases follow similarly. Note that we will be using the fact that $\tau_i^{\prime}\ :=\ {T_i^{\prime}}\, {t_i^{\prime}}^{-1}$ and ${\tau_i^{\prime}}^{-1}\ :=\ t_i^{\prime}\, {T_i^{\prime}}^{-1}$.

\bigbreak

\begin{itemize}
\item[i.]  $t_i^{\prime} \ \sim \ {t_i^{\prime}}^{-1}\, t_i^{\prime}\, comb\left(t_i^{\prime}\right) \ \sim \ comb\left(t_i^{\prime}\right) \ \overset{Rel.~(\ref{clceq})}{=} \ {T_i^{\prime}}^{-p}\, {t_i^{\prime}}^{-1}\, {T_i^{\prime}}^{p+1} \ \widehat{\simeq}\ {t_i^{\prime}}^{-1}\, {T_i^{\prime}}\ \widehat{\sim}\ {T_i^{\prime}}\, {t_i^{\prime}}^{-1} \ :=\ \tau_i^{\prime}$.
\bigbreak
\item[ii.] $t_i^{\prime} \ \sim \ {\tau_i^{\prime}}\, t_i^{\prime}\, comb\left({\tau_i^{\prime}}^{-1}\right) \ \sim \ \tau_i^{\prime}\,  t_i^{\prime}\,  {T_i^{\prime}}^{-p}\, {t_i^{\prime}}^{-1}\, {T_i^{\prime}}^p\ \simeq\ {T_i^{\prime}}^{1-p}\, {t_i^{\prime}}^{-1}\, {T_i^{\prime}}^p\ \widehat{=}\ {T_i^{\prime}}\, {t_i^{\prime}}^{-1} \ =\ \tau_i^{\prime}$.
\bigbreak
\item[iii.] $t_i^{\prime} \ \sim \ {T_i^{\prime}}^{\pm 1}\, t_i^{\prime}\, comb\left({T_i^{\prime}}^{\mp 1}\right)\ \overset{Rel.~(\ref{clceq})}{=}\ {T_i^{\prime}}^{\pm 1}\, t_i^{\prime}\, {T_i^{\prime}}^{\mp 1}\ \widehat{\simeq}\ t_i^{\prime}$.
\bigbreak
\item[iv.] $T_i^{\prime}\ \sim\ t_i^{\prime}\, T_i^{\prime}\, comb\left({t_i^{\prime}}^{-1}\right)\ \overset{Rel.~(\ref{clceq})}{=}\ t_i^{\prime}\, T_i^{\prime}\, {T_i^{\prime}}^{-p-1}\, t_i^{\prime}\, {T_i^{\prime}}^p\ \sim\ t_i^{\prime}\, {T_i^{\prime}}^{-p}\, t_i^{\prime}\, {T_i^{\prime}}^p\ =\ {\tau_i^{\prime}}^{-1}\, {T_i^{\prime}}^{-p+1}\, t_i^{\prime}\, {T_i^{\prime}}^p $\\
$\sim\ {T_{i}^{\prime}}^{-p+1} t_i^{\prime}\, {T_i^{\prime}}^p\, comb\left(\tau_i^{\prime}\right)\ \overset{Rel.~(\ref{clceq})}{=}\  {T_i^{\prime}}^{-p+1}\, t_i^{\prime}\, {T_i^{\prime}}^p\, {T_i^{\prime}}^{-p}\, {t_i^{\prime}}^{-1}\, {T_i^{\prime}}^{p}\ \widehat{=}\ T_i^{\prime}$.
\end{itemize}

\end{proof}

\subsection{Dealing with the $T$-loop generators}

We now deal with the $T$-loop generators on elements in $\mathcal{T}_1$. Note that from Lemma~\ref{clclem} we have that combed loop conjugations cannot reduce the number of $T_i$'s in monomials in $\mathcal{T}_1$, but relations~(\ref{clceq}) suggest that we can reduce them if we first manage to ``group'' them somehow (see Lemma~\ref{grT} below). We first introduce the notion of the {\it index} of a monomial in the $t_i^{\prime}$'s and $T_i^{\prime}$'s.

\begin{defn}\rm
Let $w$ be a monomial in the $t_i^{\prime}$'s and $T_i^{\prime}$'s. We define $in(w)$ to be the highest index in $w$.
\end{defn}

\begin{lemma}\label{grT}
Let $w\in \mathcal{T}_1$ such that $in(w)=n$ or $n+1$. Then:
\[
w\, T_{n+1}^{\prime}\, T_{n+2}^{\prime}\ \widehat{\simeq} \ -A^2\, w\, {T_{n+1}^{\prime}}^2\, +\, A\, (-A^2-A^{-2})\, w.
\]
\end{lemma}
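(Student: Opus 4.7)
The plan is to apply the Kauffman bracket skein relation at a crossing between the two parallel meridional loops $T_{n+1}'$ and $T_{n+2}'$, identifying one smoothing with $(T_{n+1}')^2$ (modulo a framing correction coming from destabilization) and the other with a trivial loop split off from the rest of the diagram.

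First, the hypothesis $\mathrm{in}(w)\le n+1$ ensures that $w$ acts only on moving strands with indices at most $n+2$, and hence does not touch strand $n+3$. Throughout the argument $w$ may therefore be treated as a passive prefix, commuting past every letter that will be manipulated. Using the definitional identity $T_{n+2}' = \sigma_{n+2}\,T_{n+1}'\,\sigma_{n+2}^{-1}$, immediate from $T_j' := \sigma_j\cdots\sigma_1\,T\,\sigma_1^{-1}\cdots\sigma_j^{-1}$, I would rewrite
\[
T_{n+1}'\,T_{n+2}' \;=\; T_{n+1}'\,\sigma_{n+2}\,T_{n+1}'\,\sigma_{n+2}^{-1},
\]
so that the only crossings involving strand $n+3$ are the isolated pair $\sigma_{n+2}$ and $\sigma_{n+2}^{-1}$.

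Next I would apply the Kauffman bracket skein relation $L = A\,L_\infty + A^{-1}\,L_0$ at one of these two crossings. One of the two smoothings removes the swap of strand $n+3$ and leaves a braid in which strand $n+3$ is touched by exactly one remaining crossing. Cyclic conjugation in the closure (allowed by Theorem~\ref{algcco}) places that crossing at the end, and framed Markov destabilization removes strand $n+3$ at the cost of a single Reidemeister~I curl factor $-A^{\pm 3}$; combined with the skein coefficient, the net contribution is $-A^2\,w\,(T_{n+1}')^2$. The other smoothing produces a cap-cup at the chosen crossing, which forces the two meridional arcs around the second fixed strand to be traversed in opposite directions through the cap-cup; those two arcs cancel topologically, so the remaining component is an unknotted circle bounding a disk disjoint from the fixed subbraid and from $w$. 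Applying the framed-unknot relation $L\sqcup O = (-A^2-A^{-2})L$ together with the skein coefficient yields the second contribution $A(-A^2-A^{-2})\,w$. Summing the two contributions gives the claimed identity.

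The main obstacle will be the diagrammatic bookkeeping: choosing the crossing and smoothing convention so that the curl introduced by destabilization has the correct sign (yielding $-A^3$ rather than $-A^{-3}$, and thus exactly $-A^2$ after combining with $A^{-1}$), and verifying rigorously that the cap-cup smoothing really splits off a trivial circle rather than leaving a loop still linked with the second fixed strand. Both are local checks at strands $n+2$, $n+3$, and the second fixed strand, reducing to a model computation with two parallel meridional loops around a single fixed arc; once the local identity is established, the full statement follows because $w$ plays no role in that local manipulation.
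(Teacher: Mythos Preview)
Your approach is the paper's: apply the Kauffman skein relation to the $\sigma_{n+2}^{\pm 1}$ crossing between the two $T$-loops and identify the two smoothings; the paper's proof is exactly this computation, drawn as Figure~\ref{prooflem} for the case $in(w)=n$ and declared to ``follow similarly'' for $in(w)=n+1$. One point to tighten: your assertion that the cap--cup smoothing yields a circle disjoint from $w$ uses that $w$ does not touch moving strand $n+2$, which is true only when $in(w)=n$; when $in(w)=n+1$ the last looping of $w$ rides on strand $n+2$ and becomes part of the cap--cup component, so that case needs its own local check rather than the uniform ``passive prefix'' argument you sketch (the paper does not spell this out either).
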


\begin{proof}
The proof of Lemma~\ref{grT} is illustrated in Figure~\ref{prooflem} for the case $in(w)=n$. The case $in(w)=n+1$ follows similarly. Note that in Figure~\ref{prooflem} we denote the knotting of the fixed part by $\Sigma\, :=\, \Sigma_1^{2p+1}$.
\end{proof}

\begin{figure}[!ht]
\begin{center}
\includegraphics[width=5.3in]{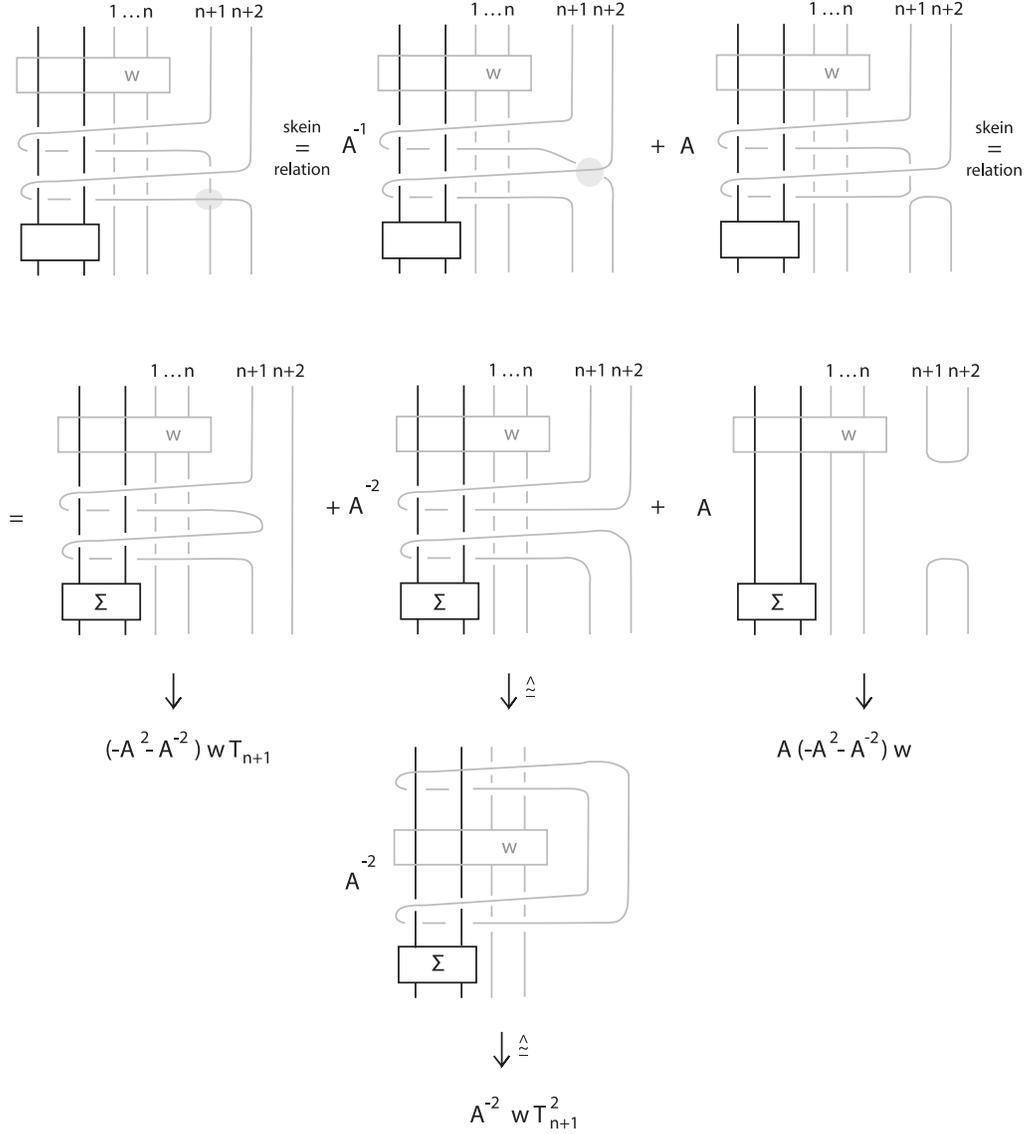}
\end{center}
\caption{The proof of Lemma~\ref{grT}.}
\label{prooflem}
\end{figure}

\noindent Note that the relations in Lemma~\ref{grT} can be generalized for arbitrary exponents (following the same steps in the proof) as follows:  

\begin{lemma}\label{grTkc}
Let $w$ be a monomial in the $t_i^{\prime}$'s and $T_i^{\prime}$'s such that $in(w)=n+1$ or $in(w)=n$. Then, the following holds in $KBSM(T_{(2, 2p+1)}^c)$:
\[
w\, {T_{n}^{\prime}}^k\, {T_{n+2}}^{\prime} \ \widehat{\simeq} \ -A^2\, w\, {T_{n+1}^{\prime}}^{k+1}\ +\ A\, (-A^2-A^{-2})\, w\, {T_{n+1}^{\prime}}^{k-1}.
\]
\end{lemma}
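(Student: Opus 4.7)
The plan is to mimic, almost verbatim, the argument of Lemma~\ref{grT} displayed in Figure~\ref{prooflem}, with the extra $k-1$ copies of the looping generator treated as passive parallel strands that ride along unchanged through every step. Concretely, I would decompose the left-hand side so that one copy of the outer looping and the factor $T_{n+2}^{\prime}$ sit on the outside of the braid, with the remaining $k-1$ copies of the looping generator nested as an inner bundle parallel to the outer loop. The point is that, once the monomial is arranged in this way, the local picture around the pair that actually participates in the move is identical to the base case $k=1$ treated in Lemma~\ref{grT}.

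The first step is the geometric isotopy depicted in Figure~\ref{prooflem}, which slides $T_{n+2}^{\prime}$ over to become adjacent to the outer looping. This move takes place in an annular neighbourhood of the outer loop inside $S^3 \setminus \widehat{B}$ that is disjoint from the $k-1$ inner spectator copies, so the bundle is unaffected. The second step applies the Kauffman bracket skein relation $L = A L_{\infty} + A^{-1} L_{0}$ to the single crossing produced in step~1: one smoothing coalesces the two outer loops into a single $T_{n+1}^{\prime}$ looping, which together with the spectator bundle yields $w\,{T_{n+1}^{\prime}}^{k+1}$; the other smoothing splits off a small unknotted circle, leaving $w\,{T_{n+1}^{\prime}}^{k-1}$ disjoint union with $\mathrm{O}$. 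The unknot is then removed by the framing relation $L \sqcup \mathrm{O} = (-A^2-A^{-2})L$, which supplies the $(-A^2-A^{-2})$ factor on the second term. The Reidemeister~I kink introduced by the isotopy of step~1 supplies the coefficient $-A^2$ on the first term and combines with the $A^{-1}$ from the second Kauffman smoothing to produce the coefficient $A(-A^2-A^{-2})$ on the second.

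The main obstacle, and the only point that requires verification, is the framing bookkeeping: one must check that the Reidemeister~I kinks contribute exactly $-A^2$ and not some other $\pm A^{\pm 3}$, and that no additional writhe is accumulated from the spectator loops running parallel to the outer one. Both checks reduce to the computation already carried out in the proof of Lemma~\ref{grT}: the spectator copies are parallel to the outer loop and therefore contribute no new crossings during the isotopy, while the framing of the outer loop before and after the isotopy agrees with that of the base case. Consequently, the lemma follows from Lemma~\ref{grT} by simply inserting the factor ${T_{n+1}^{\prime}}^{k-1}$ uniformly into each term of the resulting identity.
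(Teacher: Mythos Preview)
Your argument is correct, but the paper proceeds differently: it proves the lemma by induction on $k$, with Lemma~\ref{grT} as the base case, and in the inductive step it peels off one copy of the looping generator, absorbs it into the prefix $w$, and then invokes the induction hypothesis on the remaining $k$ copies together with $T_{n+2}^{\prime}$.

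Your approach is more geometric and in a sense more economical: rather than iterating, you observe once and for all that the $k-1$ inner copies are passive parallel strands living in a region disjoint from the local move of Figure~\ref{prooflem}, so the entire computation of Lemma~\ref{grT} goes through unchanged with these spectators carried along. This avoids the index bookkeeping of the inductive step and makes it visually clear why the coefficients are exactly those of the base case. The paper's inductive argument, on the other hand, has the advantage of being purely algebraic once Lemma~\ref{grT} is granted: one never needs to revisit the diagram or worry about whether the spectator strands might interfere with the isotopy or the framing count, since each application of the induction hypothesis is a black box. Both routes are short; yours is the picture-proof, the paper's is the formal reduction.
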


\begin{proof}
We prove Lemma~\ref{grTkc} by induction on $k\in \mathbb{N}$. The basis of induction is Lemma~\ref{grT}. Assume now that Lemma~\ref{grTkc} is true for $k$. Then, for $k+1$ we have:

\[
\begin{array}{lcl}
w\, {T_{n}^{\prime}}^{k+1}\, {T_{n+2}}^{\prime} & \widehat{\simeq} & w\, {T_{n}^{\prime}}\, \underline{{T_{n}^{\prime}}^{k}\, {T_{n+2}}^{\prime}}\ \underset{Ind.\, Step}{\widehat{\simeq}}\\
&&\\
& \widehat{\simeq} & -A^2\, w\, T_{n+1}^{\prime}\, {T_{n+1}^{\prime}}^{k+1}\ +\ A\, (-A^2-A^{-2})\, w\, T_{n+1}^{\prime}\, {T_{n+1}^{\prime}}^{k-1}\ =\\
&&\\
& = & -A^2\, w\, {T_{n+1}^{\prime}}^{k+1}\ +\ A\, (-A^2-A^{-2})\, w\, {T_{n+1}^{\prime}}^{k-1}.
\end{array}
\]
\end{proof}

Using Lemmas~\ref{grT} and \ref{grTkc}, we observe that we may ``group'' the $T$-generators, $T_{n+1, n+k}^{\prime}$, together to one $T$-generator, $T_{n+1}$, raised to an arbitrary exponent. In what follows we drop the coefficients and we have the following result:

\begin{prop}\label{groupT}
Let $w$ be a monomial in the $\mathcal{T}_1$. Then, the following holds in $KBSM(T_{(2, 2p+1)}^c)$:
\[
t_{0, n}^{\prime}\, {T_{n+1, n+k}^{\prime}} \ \widehat{\simeq} \ \begin{cases} \underset{i=0}{\overset{\frac{k}{2}}{\sum}}\, t_{0, n}^{\prime}\, {T_{n+1}^{\prime}}^{2i} &,\, k-even\\
\underset{i=0}{\overset{\frac{k+1}{2}}{\sum}}\, t_{0, n}^{\prime}\, {T_{n+1}^{\prime}}^{2i+1} &,\, k-odd.
  \end{cases}
\]
\end{prop}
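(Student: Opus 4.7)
The plan is to proceed by strong induction on $k$. The base cases $k = 0$ and $k = 1$ are immediate: for $k = 0$ the product $T_{n+1, n}^{\prime}$ is empty and the expression $t_{0, n}^{\prime}$ already has the form $t_{0, n}^{\prime} {T_{n+1}^{\prime}}^{0}$; for $k = 1$, $t_{0, n}^{\prime} T_{n+1}^{\prime}$ is already of the form $t_{0, n}^{\prime} {T_{n+1}^{\prime}}^{1}$, which is of odd parity, matching the claim for odd $k$.

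For the inductive step with $k \geq 2$, I would first isolate the rightmost pair of $T$-generators by writing
$$
t_{0, n}^{\prime} T_{n+1, n+k}^{\prime} \;=\; \bigl( t_{0, n}^{\prime} T_{n+1, n+k-2}^{\prime} \bigr)\, T_{n+k-1}^{\prime}\, T_{n+k}^{\prime},
$$
and apply Lemma~\ref{grT} with $w = t_{0, n}^{\prime} T_{n+1, n+k-2}^{\prime}$ (so that $in(w) = n+k-2$, with the lemma's indices shifted accordingly). This yields two terms: one proportional to $t_{0, n}^{\prime} T_{n+1, n+k-2}^{\prime}$, handled directly by the induction hypothesis at level $k-2$ (which has the same parity as $k$); and one proportional to $t_{0, n}^{\prime} T_{n+1, n+k-2}^{\prime} (T_{n+k-1}^{\prime})^{2}$. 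For this second term I would apply Lemma~\ref{grTkc} iteratively: each application absorbs the current rightmost $T$-power together with the appropriate preceding $T$-factor—after any needed adjacency rearrangement via Lemma~\ref{grT} or via commutation of $T$-generators of sufficiently separated index—and replaces them with a power of a $T$-generator of index one lower. After sufficiently many iterations the $T$-content collapses into a power of $T_{n+1}^{\prime}$ alone, which together with the prefactor $t_{0, n}^{\prime}$ gives the desired form.

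Parity preservation is the clean piece of the argument: every application of Lemma~\ref{grT} or Lemma~\ref{grTkc} either leaves the total $T$-exponent unchanged or decreases it by $2$, so the final exponents on $T_{n+1}^{\prime}$ all share the parity of $k$, and they lie in the range $0 \leq j \leq k$ for the same reason. The main obstacle will be the bookkeeping in the iterated index-shifting step: Lemma~\ref{grTkc} requires a precise configuration $w {T_n^{\prime}}^{c} T_{n+2}^{\prime}$ with $in(w)$ appropriately bounded, and after each application the intermediate monomial must be massaged back into this shape before the next reduction can proceed. Verifying that the iteration indeed terminates in a linear combination of the specific monomials $t_{0, n}^{\prime} {T_{n+1}^{\prime}}^{j}$ with $j$ in the asserted range amounts to careful tracking of indices at each step rather than any further conceptual input, since we are discarding the scalar coefficients anyway.
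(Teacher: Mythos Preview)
Your approach is essentially correct but organised differently from the paper's. The paper splits off the two \emph{leftmost} $T$-factors, writing
\[
t_{0,n}'\,T_{n+1,n+k+2}' \;=\; t_{0,n}'\,T_{n+1,n+2}'\,\underline{T_{n+3,n+k+2}'}\,,
\]
applies the induction hypothesis to the underlined block (which produces a single power $(T_{n+3}')^{2i+1}$), then uses closure-isotopy $\widehat{=}$ to permute exponents among the $T$-components and Lemma~\ref{grTkc} twice to absorb the two leftover single factors. You instead split off the two \emph{rightmost} factors and apply Lemma~\ref{grT} first; one resulting term falls directly under the hypothesis at level $k-2$, and the other term $t_{0,n}'\,T_{n+1,n+k-2}'\,(T_{n+k-1}')^{2}$ is collapsed by iterating Lemma~\ref{grTkc}. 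Both routes use the same ingredients and both respect parity for the same reason, so your argument goes through; the paper's version has the advantage that the induction hypothesis already does almost all the collapsing, leaving only two explicit applications of Lemma~\ref{grTkc}, whereas your version requires $k-2$ iterated applications and hence more bookkeeping (as you acknowledge).

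One point to fix: the ``adjacency rearrangement'' you need is \emph{not} commutation. The loopings $T_i'$ do not commute in $B_{2,n}$, regardless of how far apart their indices are. What makes the index-shifting possible is the closure-isotopy $\widehat{=}$: in the closed link, each $T_i'$-factor closes to a separate component looping around both fixed strands, and isotopy allows you to relabel which component carries which exponent, e.g.\ $w\,T_{m}'\,(T_{m+1}')^{r}\;\widehat{=}\;w\,(T_{m}')^{r}\,T_{m+1}'$. This is exactly the move the paper uses (and labels $\widehat{=}$) to bring the intermediate monomials into the shape required by Lemma~\ref{grTkc}. Once you replace ``commutation'' by this closure-isotopy, your iteration is well-defined and terminates as claimed.
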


\begin{proof}
We prove Proposition~\ref{groupT} by induction on $k\in \mathbb{N}$. The base of induction is Lemma~\ref{grTkc} for $k=1$. Assume now that the relation holds for $k$-odd (the case $k$-even follows similarly). Then, for $k+2$ we have:

\[
\begin{array}{lcl}
t_{0, n}^{\prime}\, {T_{n+1, n+k+2}^{\prime}} & \widehat{\simeq} & t_{0, n}^{\prime}\, {T_{n+1, n+2}^{\prime}}\, \underline{{T_{n+3, n+k+2}^{\prime}}}\ \underset{Ind.\, Step}{\widehat{\simeq}}\\
&&\\
& \widehat{\simeq} &  \underset{i=0}{\overset{\frac{k+1}{2}}{\sum}}\, t_{0, n}^{\prime}\, {T_{n+1, n+2}^{\prime}}\, {T_{n+3}^{\prime}}^{2i+1} \ \widehat{=}\ \underset{i=0}{\overset{\frac{k+1}{2}}{\sum}}\, t_{0, n}^{\prime}\, {T_{n+1}^{\prime}}\, {T_{n+2}^{\prime}}^{2i+1}\, T_{n+3}^{\prime} \ \underset{Lemma~\ref{grTkc}}{\widehat{\simeq}}\\
&&\\
&  \widehat{\simeq} & \underset{i=0}{\overset{\frac{k+1}{2}}{\sum}}\, t_{0, n}^{\prime}\, {T_{n+1}^{\prime}}\, {T_{n+2}^{\prime}}^{2i+2}\ +\ \underset{i=0}{\overset{\frac{k+1}{2}}{\sum}}\, t_{0, n}^{\prime}\, {T_{n+1}^{\prime}}\, {T_{n+2}^{\prime}}^{2i}\ \widehat{=}\\
&&\\
& \widehat{=} & \underset{i=0}{\overset{\frac{k+1}{2}}{\sum}}\, t_{0, n}^{\prime}\, {T_{n+1}^{\prime}}^{2i+2}\, \, {T_{n+2}^{\prime}}\ +\ \underset{i=0}{\overset{\frac{k+1}{2}}{\sum}}\, t_{0, n}^{\prime}\, {T_{n+1}^{\prime}}^{2i}\, \, {T_{n+2}^{\prime}}\ \underset{Lemma~\ref{grTkc}}{\widehat{\simeq}}\\
&&\\
& \widehat{\simeq} & \underset{i=0}{\overset{\frac{k+1}{2}}{\sum}}\, t_{0, n}^{\prime}\, {T_{n+1}^{\prime}}^{2i+3}\ +\ \underset{i=0}{\overset{\frac{k+1}{2}}{\sum}}\, t_{0, n}^{\prime}\, {T_{n+1}^{\prime}}^{2i+1}\ +\ \underset{i=1}{\overset{\frac{k+1}{2}}{\sum}}\, t_{0, n}^{\prime}\, {T_{n+1}^{\prime}}^{2i+-1} \ \overset{j=i+1}{=}\\
&&\\
& = & \underset{j=0}{\overset{\frac{k+3}{2}}{\sum}}\, t_{0, n}^{\prime}\, {T_{n+1}^{\prime}}^{2j+1}.
\end{array}
\]
\end{proof}

We now present the following fundamental result based on combing:

\begin{lemma}\label{l1}
Let $w$ be a monomial in the $t_i^{\prime}$'s and $T_i^{\prime}$'s such that $in(w)=n$ and let $k\in \mathbb{N}\cup \{0\}$. Then, the following holds in $KBSM(T_{(2, 2p+1)}^c)$:
\bigbreak

\[
\begin{array}{rrcl}
i. & w\, {T_{n+1}^{\prime}}^{p+1} & \widehat{\simeq} & w\, {t_{n+1}^{\prime}}^{2}\, {T_{n+1}^{\prime}}^p\\
&&&\\
ii. & w\, {t_{n+1}^{\prime}}^k\, {T_{n+1}^{\prime}}^{p+1} & \widehat{\simeq} & w\, {t_{n+1}^{\prime}}^{k+2}\, {T_{n+1}^{\prime}}^p.
\end{array}
\]

\end{lemma}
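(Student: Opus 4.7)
The plan is to derive~(ii) by a single application of combed loop conjugation and to obtain~(i) as the special case $k=0$. The main tool is the identity
\[
\alpha\ \widehat{\simeq}\ t_{n+1}^{\prime}\, \alpha\, {T_{n+1}^{\prime}}^{-p-1}\, t_{n+1}^{\prime}\, {T_{n+1}^{\prime}}^{p},
\]
which is the second relation of~(\ref{clceq}) transported to strand position $n+1$. I would apply this with $\alpha\, :=\, w\, {t_{n+1}^{\prime}}^{k}\, {T_{n+1}^{\prime}}^{p+1}$. The crucial cancellation is that the trailing ${T_{n+1}^{\prime}}^{p+1}$ of $\alpha$ immediately meets the ${T_{n+1}^{\prime}}^{-p-1}$ coming from $comb({t_{n+1}^{\prime}}^{-1})$, so the right-hand side collapses to $t_{n+1}^{\prime}\, w\, {t_{n+1}^{\prime}}^{k+1}\, {T_{n+1}^{\prime}}^{p}$.

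It then remains to commute the leading $t_{n+1}^{\prime}$ past $w$ to produce $w\, {t_{n+1}^{\prime}}^{k+2}\, {T_{n+1}^{\prime}}^{p}$, which is~(ii); setting $k=0$ recovers~(i). This commutation step is the heart of the argument. It is \emph{not} a braid identity in $B_{2,n+2}$: unpacking $t_{n+1}^{\prime}=\sigma_{n+1}\cdots\sigma_{1}\, t\, \sigma_{1}^{-1}\cdots\sigma_{n+1}^{-1}$ shows that the word braids strand $n+2$ through every lower strand and therefore interacts nontrivially with the generators $\sigma_j$, $t_i^{\prime}$, $T_i^{\prime}$ (with $i\le n$) occurring in $w$.

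The main obstacle is therefore establishing the commutation $t_{n+1}^{\prime}\, w\ \widehat{\simeq}\ w\, t_{n+1}^{\prime}$ inside $KBSM(T_{(2,2p+1)}^c)$. Since $in(w)=n$, the closure of $t_{n+1}^{\prime}$ in $T_{(2,2p+1)}^c$ is an extra small circle encircling the first fixed strand, geometrically separated from and unlinked with the closure of $w$; hence a direct topological isotopy in $T_{(2,2p+1)}^c$ suffices to reorder the two factors. Alternatively, one may give a purely algebraic proof by induction on the length of $w$, pushing $t_{n+1}^{\prime}$ through each generator using the defining relations of $B_{2,\infty}$ together with the combed loop conjugations of Theorem~\ref{algcco}; this mirrors the commutation arguments employed for the analogous situation in $KBSM(H_2)$ in~\cite{D1}.
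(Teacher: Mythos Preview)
Your argument is essentially the same as the paper's: both rewrite $w\,{t_{n+1}'}^{k}\,{T_{n+1}'}^{p+1}$ using a single combed $t_{n+1}'$-loop conjugation (you apply $\alpha\sim t_{n+1}'\alpha\,comb({t_{n+1}'}^{-1})$, the paper factors off $comb(t_{n+1}')$ on the right and applies the move in the other direction) and then pass the resulting leading $t_{n+1}'$ through $w$. The only difference is that you make the commutation step $t_{n+1}'\,w\ \widehat{\simeq}\ w\,t_{n+1}'$ explicit and justify it geometrically, whereas the paper absorbs it silently into the symbol~$\widehat{\simeq}$.
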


\begin{proof}
The relations follow as a result of combed loop conjugation (recall Rel.~(\ref{clceq})). Indeed, for relation (ii) we have the following:
\[
\begin{array}{lcl}
w\, {t_{n+1}^{\prime}}^k\, {T_{n+1}^{\prime}}^{p+1} & \sim & w\, {t_{n+1}^{\prime}}^k\, {t_{n+1}^{\prime}}\, {T_{n+1}^{\prime}}^p\, \left({T_{n+1}^{\prime}}^{-p}\, {t_{n+1}^{\prime}}^{-1}\, {T_{n+1}^{\prime}}^{p+1} \right)\ =\\
&&\\
&= & w\, {t_{n+1}^{\prime}}^{k+1}\, {T_{n+1}^{\prime}}^p\, comb(t_{n+1}^{\prime})\ \widehat{\simeq}\ w\, {t_{n+1}^{\prime}}^{k+2}\, {T_{n+1}^{\prime}}^p.
\end{array}
\]
\noindent Relation (i) follows similarly.
\end{proof}



An immediate result of Lemmas~\ref{grT}, \ref{grTkc}, \ref{l1} and Proposition~\ref{groupT} is the following:

\begin{lemma}\label{lsq}
For $m>p+1$, the following holds in $KBSM(T_{(2, 2p+1)}^c)$:
\begin{equation}\label{Tsq}
t_{0, n}^{\prime}\, T_{n+1, n+m}^{\prime}\ \widehat{\simeq}\ \underset{i=0}{\overset{p}{\sum}}\, t_{0, n}^{\prime}\, {t_{n+1}^{\prime}}^{k+i}\, {T_{n+1}^{\prime}}^i,
\end{equation}
\noindent where $k_i\in \mathbb{N}$ even.
\end{lemma}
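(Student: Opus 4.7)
The proof will proceed in two stages: first regroup the product of distinct $T$-loops into powers of a single $T_{n+1}^{\prime}$, and then reduce those high powers to exponents at most $p$ at the cost of even powers of $t_{n+1}^{\prime}$.

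\textbf{Stage 1 (regrouping).} I would apply Proposition~\ref{groupT} to $t_{0,n}^{\prime}\,T_{n+1,n+m}^{\prime}$ to obtain an expression of the form
\[
t_{0,n}^{\prime}\,T_{n+1,n+m}^{\prime}\ \widehat{\simeq}\ \sum_{j} \alpha_{j}(A)\, t_{0,n}^{\prime}\, (T_{n+1}^{\prime})^{j},
\]
where the exponents $j$ share the parity of $m$ and are bounded in terms of $m$. Every summand with $j \leq p$ is already of the form required by the conclusion (taking $k_{j}=0$, which is an admissible even value). Thus the content of the lemma reduces to the problem of rewriting the summands with $j>p$.

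\textbf{Stage 2 (exponent reduction).} I would establish the auxiliary statement, by induction on $j \geq p+1$, that for every admissible prefix $w = t_{0,n}^{\prime}$,
\[
w\,(T_{n+1}^{\prime})^{j}\ \widehat{\simeq}\ \sum_{i=0}^{p} \beta_{i,j}(A)\, w\,(t_{n+1}^{\prime})^{2\ell_{i,j}}\,(T_{n+1}^{\prime})^{i},
\]
with $\ell_{i,j}\in\mathbb{N}$. The base case $j=p+1$ is precisely Lemma~\ref{l1}(i). For the inductive step I would first use Lemma~\ref{grTkc} in reverse to express a power $(T_{n+1}^{\prime})^{j}$ as a $\mathbb{Z}[A^{\pm1}]$-linear combination of terms of the form $(T_{n}^{\prime})^{*}T_{n+2}^{\prime}$ together with strictly lower powers of $T_{n+1}^{\prime}$. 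Reapplying Proposition~\ref{groupT} to each such term regroups the $T$-loops again into powers of a single $T_{n+1}^{\prime}$, but with a strictly smaller maximal $T_{n+1}^{\prime}$-exponent, at which point the inductive hypothesis applies; meanwhile, any suffix of the form $(T_{n+1}^{\prime})^{p+1}$ that appears is converted by Lemma~\ref{l1}(i)--(ii) into $(t_{n+1}^{\prime})^{2}(T_{n+1}^{\prime})^{p}$, contributing only an even power of $t_{n+1}^{\prime}$. Assembling the pieces and absorbing all numerical factors into the coefficients $\beta_{i,j}(A)$ gives the claim, and the parity tracking shows the exponents of $t_{n+1}^{\prime}$ produced at each stage are even, so the overall $k_{i}$'s are even.

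Finally, I would combine Stage~1 and Stage~2 to rewrite the original expression as a $\mathbb{Z}[A^{\pm1}]$-linear combination of monomials $t_{0,n}^{\prime}(t_{n+1}^{\prime})^{k_{i}}(T_{n+1}^{\prime})^{i}$ with $0\leq i\leq p$ and $k_{i}$ even; collecting like terms produces the displayed sum of the lemma.

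\textbf{Main obstacle.} The delicate point is the inductive step of Stage~2. Lemma~\ref{l1} converts the suffix $(T_{n+1}^{\prime})^{p+1}$ into $(t_{n+1}^{\prime})^{2}(T_{n+1}^{\prime})^{p}$ only when the preceding factor $w$ satisfies $\mathrm{in}(w)=n$, because the proof relies on $t_{n+1}^{\prime}$ commuting past $w$. Since $t_{n+1}^{\prime}$ does not commute with $T_{n+1}^{\prime}$, one cannot naively split $(T_{n+1}^{\prime})^{j}=(T_{n+1}^{\prime})^{j-p-1}\cdot(T_{n+1}^{\prime})^{p+1}$ and treat $w(T_{n+1}^{\prime})^{j-p-1}$ as the new prefix; the final commuting step in the proof of Lemma~\ref{l1} would simply fail. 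The detour through Lemma~\ref{grTkc} is precisely what is needed to dismantle the power $(T_{n+1}^{\prime})^{j}$ by temporarily expanding it across several consecutive indices, where Proposition~\ref{groupT} can shrink it back to a strictly smaller power of $T_{n+1}^{\prime}$; this is the step that makes the induction work. Verifying that all the powers of $t_{n+1}^{\prime}$ introduced along the way are even, and that the resulting coefficients lie in $\mathbb{Z}[A^{\pm1}]$, is essentially bookkeeping once this mechanism is in place.
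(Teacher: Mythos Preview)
Your two-stage outline and your diagnosis of the main obstacle are both on point, and you are invoking exactly the four results the paper cites as making Lemma~\ref{lsq} ``immediate''. The gap is in the mechanism you propose for Stage~2. Inverting Lemma~\ref{grTkc} on $w(T_{n+1}')^{j}$ produces (up to coefficients) $w(T_{n}')^{j-1}T_{n+2}'$ together with $w(T_{n+1}')^{j-2}$. But the only tool available to regroup $w(T_{n}')^{j-1}T_{n+2}'$ into powers of a single $T_{n+1}'$ is Lemma~\ref{grTkc} itself; Proposition~\ref{groupT} applies only to products $T_{n+1,n+k}'$ of \emph{distinct} consecutive $T$-loops, each to the first power, and says nothing about $(T_{n}')^{j-1}T_{n+2}'$. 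Applying Lemma~\ref{grTkc} forward just returns $(T_{n+1}')^{j}$ as the leading term, so the claimed ``strictly smaller maximal $T_{n+1}'$-exponent'' is not achieved and your induction does not terminate.

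The route the paper actually takes, spelled out in the trefoil example of \S\ref{trefc}, avoids this by \emph{interleaving} the grouping and reducing steps rather than separating them into stages. One processes the $T$-loops from the right end of $T_{n+1,n+m}'$: merge the two rightmost via Lemma~\ref{grT} to raise the exponent by one; the moment the exponent reaches $p+1$, apply Lemma~\ref{l1} to trade $(T')^{p+1}$ for $(t')^{2}(T')^{p}$; then use conjugation in the closure to slide the new $(t')^{2}$ factor leftward past the adjacent $T$-loop, restoring the shape needed for another application of Lemma~\ref{grT}. Since the $T$-exponent is never allowed to exceed $p+1$, the index hypothesis of Lemma~\ref{l1} is always satisfied, each pass consumes one $T$-loop from the original product, and the even powers of $t_{n+1}'$ accumulate exactly as you predicted.
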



\begin{cor}
The set $\mathcal{T}_2$ that consists of elements in the form of Equation~(\ref{Tsq}) is a spanning set of $KBSM(T_{(2, 2p+1)}^c)$.
\end{cor}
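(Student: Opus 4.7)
The plan is to reduce every element of the spanning set $\mathcal{T}_1=\{t_{0,n}^{\prime}\,T_{n+1,n+m}^{\prime}\mid n,m\in\mathbb{N}\cup\{0\}\}$ obtained in Proposition~\ref{tautot} to a $\mathbb{Z}[A^{\pm 1}]$-linear combination of monomials of the form $t_{0,n}^{\prime}\,{t_{n+1}^{\prime}}^{k+i}\,{T_{n+1}^{\prime}}^{i}$ appearing on the right-hand side of Equation~(\ref{Tsq}). Since $\mathcal{T}_1$ already spans $KBSM(T_{(2,2p+1)}^c)$, this suffices.

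I would fix an arbitrary generator $w=t_{0,n}^{\prime}\,T_{n+1,n+m}^{\prime}\in\mathcal{T}_1$ and split the argument on the magnitude of $m$ relative to $p+1$. When $m>p+1$, Lemma~\ref{lsq} directly delivers the required expression of $w$ as the desired sum, so $w$ lies in the span of $\mathcal{T}_2$. When $m\leq p+1$, I would first invoke Proposition~\ref{groupT} to rewrite $w$ as a linear combination of monomials of the shape $t_{0,n}^{\prime}\,{T_{n+1}^{\prime}}^{j}$ with $j\leq m\leq p+1$. Any such monomial with $j\leq p$ is already of the form occurring in Equation~(\ref{Tsq}) (taking the single summand with exponent of $T_{n+1}^{\prime}$ equal to $j$ and the exponent of $t_{n+1}^{\prime}$ equal to zero). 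For the single borderline case $j=p+1$, one application of Lemma~\ref{l1}(i) gives $t_{0,n}^{\prime}\,{T_{n+1}^{\prime}}^{p+1}\ \widehat{\simeq}\ t_{0,n}^{\prime}\,{t_{n+1}^{\prime}}^{2}\,{T_{n+1}^{\prime}}^{p}$, which again is of the required form.

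Collecting both cases shows that every element of $\mathcal{T}_1$ is $\widehat{\simeq}$-equivalent modulo the Kauffman bracket skein relations and combed loop conjugations to a linear combination of elements of $\mathcal{T}_2$, so $\mathcal{T}_2$ spans $KBSM(T_{(2,2p+1)}^c)$.

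The corollary is essentially bookkeeping: all of the genuinely technical work (the grouping of consecutive $T$-loops, the inductive combing identities, and the swap between high powers of $T_{n+1}^{\prime}$ and squares of $t_{n+1}^{\prime}$) has already been absorbed into Lemmas~\ref{grT}--\ref{lsq} and Proposition~\ref{groupT}. The only point that requires a bit of care, and which I view as the main (minor) obstacle, is verifying that the range $m\leq p+1$, which is \emph{not} covered by the hypothesis of Lemma~\ref{lsq}, still produces monomials of the prescribed form; this is exactly where the borderline application of Lemma~\ref{l1}(i) is needed.
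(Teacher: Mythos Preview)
Your proposal is correct and follows the natural line the paper leaves implicit: the paper gives no proof for this corollary, treating it as an immediate consequence of Lemma~\ref{lsq} together with the fact that $\mathcal{T}_1$ spans, and you have simply filled in the one detail the paper glosses over, namely the range $m\le p+1$ not covered by the hypothesis of Lemma~\ref{lsq}, via Proposition~\ref{groupT} and a single use of Lemma~\ref{l1}(i). This is exactly in the spirit of the paper's argument, so there is no meaningful divergence to report.
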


Consider now an element $w$ in $\mathcal{T}_2$, $t_{0, n}^{\prime}\, T_{n+1, n+m}^{\prime}$ such that $m=p+k$, for $k\in \mathbb{N}$ and $k<p$. Then, write $w$ in the following form 

\[
t_{0, n}^{\prime}\, T_{n+1, n+p+k}^{\prime}\ =\ t_{0, n}^{\prime}\, \underline{T_{n+1, n+k}^{\prime}\, T_{n+k+1, n+k+p}^{\prime}}
\]

\noindent and apply Proposition~\ref{groupT} on the underlined expression to get:

\[
\underset{j=0}{\overset{p+1}{\sum}}\, t_{0, n}^{\prime}\, T_{n+1, n+k}^{\prime}\, {T_{n+k+1}^{\prime}}^{j}.
\]

\noindent For $j=p+1$, we apply Lemma~\ref{l1} in order to reduce the exponent $p+1$ to $p$ and we then group the $T_i^{\prime}$'s again in order to increase the exponent to $p+1$.

\bigbreak

Continuing in that way, and if we let ${t_{k, k+p}^{\prime}}^{\lambda_{k, k+p}}\ :=\  {t_{k}^{\prime}}^{\lambda_{k}}\ldots {t_{k+p}^{\prime}}^{\lambda_{k+p}}$, for $k, p\in \mathbb{N}$, we will eventually obtain a sum of elements of the following forms:

\begin{equation}\label{bel}
\begin{array}{cll}
(a)-form: & {t_{0, n}^{\prime}}^{\lambda_{0, n}}\\
&&\\
(b)-form: & {t_{0, n}^{\prime}}^{\mu_{0, n}}\, {T_{n+1}^{\prime}}^{j}\\
&&\\
(c)-form: & {t_{0, n}^{\prime}}^{\phi_{0, n}}\, {T_{n}^{\prime}}^{j},
\end{array}
\end{equation}

\noindent where $n, \lambda_i, \mu_i, \phi_i\in \mathbb{N}$ for all $i$a and $j\in \{0, 1, \ldots, p\}$. Thus, we have reduced the number of the $T$-loop generators needed in order to generate $KBSM(T_{2, 2p+1}^c)$ and we have proved the following result:

\begin{prop}
The set
\[
\mathcal{T}_3\ =\ \left\{{t_{0, n}^{\prime}}^{k_{0, n}}\, {T_{n+j}^{\prime}}^{v},\ |\ {\rm where}\ n, k_i\in \mathbb{N}\ {\rm for\ all}\ i\ {\rm and}\ j, v\in \{0, 1\}\right\}.
\]
\noindent spans $KBSM(T_{2, 2p+1}^c)$.
\end{prop}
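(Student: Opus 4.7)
The plan is to take an arbitrary element $w = t_{0,n}^{\prime}\, T_{n+1, n+m}^{\prime}$ of the spanning set $\mathcal{T}_2$ and reduce it, modulo the Kauffman bracket skein relations together with combed loop conjugations, into a finite $R$-linear combination of elements of the three normal forms (a), (b), (c) collected in equation~(\ref{bel}); these normal forms are precisely the elements of $\mathcal{T}_3$. The two reduction engines available are Proposition~\ref{groupT}, which compresses a chain $T_{n+1, n+k}^{\prime}$ into a sum of powers of a single looping $T_{n+1}^{\prime}$, and Lemma~\ref{l1}, which exchanges a factor ${T^{\prime}}^{p+1}$ for ${t^{\prime}}^{2}\, {T^{\prime}}^{p}$. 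Together with Lemma~\ref{lsq}, these should suffice to collapse the $T$-chain of $w$ into a single $T$-loop factor of controlled index.

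I would argue by strong induction on the chain length $m$. The base case $m = 0$ already yields an element of form~(a). For the inductive step when $1 \leq m \leq p+1$, Proposition~\ref{groupT} writes $w$ as a sum of terms $t_{0,n}^{\prime}\, {T_{n+1}^{\prime}}^{v}$ with $0 \leq v \leq p+1$, and a single application of Lemma~\ref{l1}(i) trades any term with $v = p+1$ for an element of form~(b). When $m > p+1$, I follow the recipe sketched in the paragraph preceding the statement: write $m = p + k$, split $T_{n+1, n+m}^{\prime} = T_{n+1, n+k}^{\prime} \cdot T_{n+k+1, n+k+p}^{\prime}$, apply Proposition~\ref{groupT} to the right block to produce a finite sum of terms $t_{0,n}^{\prime}\, T_{n+1, n+k}^{\prime}\, {T_{n+k+1}^{\prime}}^{v}$ with $v \leq p+1$, and use Lemma~\ref{l1}(i) whenever the exponent $p+1$ appears so as to absorb it into the $t$-prefix. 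Each resulting summand has a $T$-chain of strictly shorter total length than $m$, so the inductive hypothesis applies and lands everything in the span of $\mathcal{T}_3$. A lexicographic complexity measure on the pair (chain length, top $T$-exponent) makes the termination rigorous.

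The main technical obstacle I anticipate lies in the index bookkeeping. Every invocation of Proposition~\ref{groupT} shifts the surviving $T$-generator index by one, and every invocation of Lemma~\ref{l1} produces $t$-factors at this shifted index rather than at indices $\leq n$. I therefore need to verify that when the reduction terminates, the unique $T$-factor sits at $T_n^{\prime}$ or $T_{n+1}^{\prime}$, matching the condition $j \in \{0,1\}$ in the statement, and that the newly created ${t^{\prime}}^{2}$ contributions assemble coherently into a single monomial ${t_{0,n}^{\prime}}^{k_{0,n}}$ prefix, without leaving a $t$-generator stranded above the surviving $T$-index in a way that breaks form (b) or (c). Controlling this drift, together with verifying that the three normal forms of equation~(\ref{bel}) are genuine fixed points of the entire reduction procedure, is the delicate combinatorial step; once it is in place, the proposition follows immediately from the reductions above.
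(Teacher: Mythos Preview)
Your approach is essentially the same as the paper's: both start from the chain form $t_{0,n}^{\prime}\, T_{n+1,n+m}^{\prime}$, split off a length-$p$ block of $T$'s, collapse it via Proposition~\ref{groupT}, invoke Lemma~\ref{l1} on any exponent $p+1$ that appears, and iterate. The paper does not set up a formal induction either; it simply says ``Continuing in that way $\ldots$ we will eventually obtain a sum of elements of the following forms'' and lists (a), (b), (c). Your lexicographic complexity measure is a reasonable way to make that termination precise.

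The one point you flag as ``the delicate combinatorial step''---the index drift of the ${t^{\prime}}^{2}$ factors produced by Lemma~\ref{l1}---is indeed the only real content, and the paper's resolution is not stated in the general argument but is visible in the trefoil worked example in \S\ref{trefc}. There, after Lemma~\ref{l1} produces $\ldots T_{n+m-2}^{\prime}\,{t_{n+m-1}^{\prime}}^{2}\,T_{n+m-1}^{\prime}$, the paper applies conjugation in the closure (the ``$\widehat{=}$'' step) to rewrite this as $\ldots {t_{n+m-2}^{\prime}}^{2}\,T_{n+m-2}^{\prime}\,T_{n+m-1}^{\prime}$, sliding the new $t$-factor down one index so that it sits adjacent to the existing $t$-prefix and the $T$-chain re-forms for another pass of Lemma~\ref{grT}. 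This conjugation trick is what guarantees the $t$-block stays gap-free and the surviving $T$ ends up at index $n$ or $n+1$; once you incorporate it, your induction closes exactly as you outline.
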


Obviously, the set $\mathcal{T}_3$ consists of elements in the (a), (b) and (c)-forms (Eq.~(\ref{bel})). In the next section we treat elements in the three forms of Eq.~(\ref{bel}) in order to reach at a basis of $KBSM(T_{2, 2p+1}^c)$.

\section{The basis $\mathcal{B}_{T_{2, 2p+1}^c}$ of $KBSM(T_{2, 2p+1}^c$}\label{basTrc}

In this section we obtain the basis $\mathcal{B}_{T_{2, 2p+1}^c}$ of $KBSM(T_{2, 2p+1}^c)$. Our starting point is the spanning set $\mathcal{T}_3$ and an ordering relation defined in $\mathcal{T}_3$. Note that elements in $\mathcal{T}_3$ consist of monomials in the $t_i^{\prime}$'s without gaps in the indices and with arbitrary exponents in $\mathbb{N}$, followed maybe by a $T$-generator that either has the same index as the highest index of the monomial of the $t_i^{\prime}$'s, say $m$, or the index of the $T$-generator is $m+1$.

\subsection{An ordering relation on $\mathcal{T}_3$}

Before we proceed with the ordering relation defined on the set $\mathcal{T}_3$, we introduce the following notions:

\begin{defn}\rm
The $t$-index of a monomial $w$ in $\mathcal{T}_3$, denoted $ind_t(w)$, is defined to be the highest index of the $t_i^{\prime}$'s in $w$. Similarly, the $T$-index, $ind_{T}(w)$, is defined to be the highest index of the $T_i^{\prime}$'s in $w$. We also define $exp(w)$ to be the sum of all exponents in $w$.
\end{defn}

We will also write $t_i$ instead of $t_i^{\prime}$ and $T_i$ instead of $T_i^{\prime}$ from now on, in order to simplify the relations. We are now in position to define an ordering relation on $\mathcal{T}_3$.

\begin{defn}\label{order}\rm
Let $n, m, k_i, r_i \in \mathbb{N}$ for all $i$ and let:
\[
w\, =\, t_{0, n}^{k_{0, n}}\, T_{n+j}^{v_1}\ {\rm and}\ u\, =\, t_{0, m}^{r_{0, m}}\, T_{m+j}^{v_2},\ {\rm where}\ j\in \{0, 1\}\ {\rm and}\ v_i\in \{0, \ldots, p\}\ {\rm for}\ i=1, 2.
\]

\smallbreak

\noindent Then, we define the following ordering relation:

\smallbreak

\begin{itemize}
\item[(A)] If $exp(w)\,  <\, exp(u)$, then $w<u$.

\vspace{.1in}

\item[(B)] If $exp(w)\, =\, exp(u)$, then:

\vspace{.1in}

\noindent  (i) if $ind_T(w)\, <\, ind_T(u)$, then $w<u$,

\vspace{.1in}

\noindent  (ii) if $ind_T(w)\, =\, ind_T(u)$, then:

\vspace{.1in}

\noindent \ \ \ \ (a) if $ind_{t}(w)\, <\, ind_{t}(u)$, then $w<u$,

\vspace{.1in}

\noindent \ \ \ \ (b) if $ind_{t}(w)\, =\, ind_{t}(u)$ and $v_2<v_1$, then $w<u$.

\vspace{.1in}

\noindent \ \ \ \  (c) if $ind_{t}(w)\, =\, ind_{t}(u)$ and $v_2=v_1$ then:

\vspace{.1in}

\noindent \ \ \ \ \ \ \ ($\bullet$) if $k_n=r_n, \ldots, k_{n-i}=r_{n-1}, k_{n-i-1}<r_{n-i-1}$, then $w<u$.

\vspace{.1in}

\noindent \ \ \ \ \ \ \ ($\bullet$) if $k_{i}=r_{i}, \forall i$, then $w=u$. 
\end{itemize}
\end{defn}

\begin{prop}\label{totord}
The set $\mathcal{T}_3$ equipped with the ordering relation of Definition~\ref{order}, is a totally ordered and a well-ordered set.
\end{prop}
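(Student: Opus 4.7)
The plan is to verify first that the relation of Definition~\ref{order} is a total order, and then to deduce well-ordering from the fact that criterion (A) partitions $\mathcal{T}_3$ into finite level sets. The total-order part is a lexicographic exercise, while well-ordering reduces to finding the minimum of a finite set.

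For the total order, I would view Definition~\ref{order} as a lexicographic comparison on the tuple of invariants
\[
I(w) \;=\; \bigl(\exp(w),\; \mathrm{ind}_T(w),\; \mathrm{ind}_t(w),\; -v(w)\bigr),
\]
followed, on the tie-set where all four of these coincide, by the reverse lexicographic order on the tuple $(k_n, k_{n-1}, \ldots, k_0)$ of $t$-exponents. The minus sign on $v(w)$ reflects clause (B)(ii)(b), in which a \emph{larger} exponent on the $T$-generator corresponds to a \emph{smaller} element. Each of the first four coordinates takes values in a totally ordered set (either $\mathbb{N}$ or the finite set $\{0,\ldots,p\}$), so trichotomy for them is automatic. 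When the first four coincide, the equality $\mathrm{ind}_t(w)=\mathrm{ind}_t(u)=n$ forces both $t$-exponent tuples to have length $n+1$, so clause (B)(ii)(c) becomes a genuine lex comparison on $\mathbb{N}^{n+1}$, which yields trichotomy, transitivity, and antisymmetry by the standard argument; the final bullet of (B)(ii)(c) guarantees that equality at every coordinate forces $w$ and $u$ to coincide as monomials.

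For well-ordering, let $\emptyset \neq S \subseteq \mathcal{T}_3$ and set $N_0 := \min\{\exp(w) : w \in S\}$, which exists by well-ordering of $\mathbb{N}$. Let $S_0 := \{w \in S : \exp(w) = N_0\}$. The key observation is that $S_0$ is \emph{finite}: the condition $\exp(w) = N_0$ simultaneously bounds $\mathrm{ind}_t(w)$, bounds $v(w)$, and bounds each individual $t$-exponent, so there are only finitely many monomials in $\mathcal{T}_3$ meeting it. By clause (A), every element of $S \setminus S_0$ strictly dominates every element of $S_0$, hence $\min S = \min S_0$, which exists since $S_0$ is a nonempty finite totally ordered set.

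The main obstacle I anticipate is purely bookkeeping: verifying that clause (B)(ii)(c) is invoked on tuples of the same length whenever it is reached, and fixing conventions for edge cases (a missing $T$-generator is recorded as $v=0$, and $\mathrm{ind}_T$ is defined consistently so that equality of $\mathrm{ind}_T$ together with equality of $\mathrm{ind}_t$ and $\exp$ constrains $j\in\{0,1\}$ identically for both elements). Once these conventions are fixed, both total ordering and well-ordering follow without further work.
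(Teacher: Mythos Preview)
Your proposal is correct. For the total-order part it is essentially the paper's argument repackaged: the paper carries out an explicit case-by-case verification of transitivity (and waves at antisymmetry and totality), whereas you observe once and for all that Definition~\ref{order} is the lexicographic order on the tuple $(\exp,\ \mathrm{ind}_T,\ \mathrm{ind}_t,\ -v;\ k_n,k_{n-1},\ldots,k_0)$ and inherit all three properties from that. The content is the same; your formulation is more compact and makes the bookkeeping issues (the edge case $v=0$, and why equal $\mathrm{ind}_T$ and $\mathrm{ind}_t$ force the same $j$) explicit rather than implicit.

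For well-ordering the two arguments genuinely differ. The paper simply notes that the unknot $t^0T^0$ is the minimum of $\mathcal{T}_3$ and declares well-ordering from that. Your argument---pass to the finite level set $S_0=\{w\in S:\exp(w)=N_0\}$ and take its minimum---actually establishes that every nonempty subset has a least element, which is what well-ordering requires. So your route is not just different but supplies the step the paper's proof leaves out, at the modest cost of checking that fixing $\exp(w)$ bounds $n$, $v$, and each $k_i$ simultaneously (which it does once one reads the $k_i$ as strictly positive, as the ``no gaps'' description of $\mathcal{T}_3$ intends).
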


\begin{proof}
In order to show that the set $\mathcal{T}_3$ is a totally ordered set when equipped with the ordering of Definition~\ref{order}, we need to show that the ordering relation is antisymmetric, transitive and total. We only show that the ordering relation is transitive. Antisymmetric property follows similarly. Totality follows from Definition~\ref{order} since all possible cases have been considered. Let:

\[
w \ = \ t_{0, n}^{k_{0, n}}\, T_{n+j_1}^{\mu_1},\ \ \ u \ = \ t_{0, m}^{r_{0, m}}\, T_{m+j_2}^{\mu_2},\ \ \ v \ = \ t_{0, q}^{l_{0, q}}\, T_{q+j_3}^{\mu_3},
\]

\noindent where $j_1, j_2, j_3\in \{0, 1\}$, $\mu_1, \mu_2, \mu_3\in \{0, \ldots, p\}$ and such that $w<u$ and $u<v$. Then, one of the following holds:

\bigbreak

\begin{itemize}
\item[a.] Either $exp(w)<exp(u)$, and since $u<v$, we have $exp(u)\leq exp(v)$ and thus, $u<v$.
\bigbreak
\item[b.] Either $exp(w)=exp(u)$ and $ind_T(w)<ind_T(u)$. Then, since $u<v$ we have that either $exp(u)<exp(v)$ (same as in case (a)), or $exp(u)=exp(v)$ and $ind_T(u)\leq ind_T(v)$. Thus, $ind_T(w)<ind_T(v)$ and so we conclude that $w<v$.
\bigbreak
\item[c.] Either $exp(w)=exp(u)$, $ind_T(w)=ind_T(u)$ and $ind_t(w)<ind_t(u)$. Then, since $u<v$, we have that either:
\bigbreak
\begin{itemize}
\item[$\bullet$] $exp(u)<exp(v)$, same as in case (a), or
\smallbreak
\item[$\bullet$] $exp(u)=exp(v)$ and $ind_T(u)<ind_T(v)$, same as in case (b), or
\smallbreak
\item[$\bullet$] $exp(u)=exp(v)$, $ind_T(u)=ind_T(v)$ and $ind_t(u)<ind_t(v)$. Thus, $w<v$.
\end{itemize}
\bigbreak
\item[d.] Either $exp(w)=exp(u)$, $ind_T(w)=ind_T(u)$, $ind_t(w)=ind_t(u)$ and $\mu_1<\mu_2$. Then, since $u<v$, we have that either:
\begin{itemize}
\item[(i)] $exp(u)<exp(v)$, same as in case (a), or
\smallbreak
\item[(ii)] $exp(u)=exp(v)$ and $ind_T(u)<ind_T(v)$, same as in case (b), or
\smallbreak
\item[(iii)] $exp(u)=exp(v)$, $ind_T(u)=ind_T(v)$ and $ind_t(u)<ind_t(v)$, same as in case (c), or
\smallbreak
\item[(iv)] $exp(u)=exp(v)$, $ind_T(u)=ind_T(v)$, $ind_t(u)<ind_t(v)$ and $\mu_2<\mu_3$. Then: $\mu_1<\mu_3$ and thus, $w<v$.
\end{itemize}
\bigbreak
\item[e.] Either $exp(w)=exp(u)$, $ind_T(w)=ind_T(u)$, $ind_t(w)=ind_t(u)$, $\mu_1=\mu_2$ and $k_n=r_n, \ldots, k_{n-i}=r_{n-1}, k_{n-i-1}<r_{n-i-1}$. Then, since $u<v$, we have that either: 
\begin{itemize}
\item[(i)] $exp(u)<exp(v)$, same as in case (a), or
\smallbreak
\item[(ii)] $exp(u)=exp(v)$ and $ind_T(u)<ind_T(v)$, same as in case (b), or
\smallbreak
\item[(iii)] $exp(u)=exp(v)$, $ind_T(u)=ind_T(v)$ and $ind_t(u)<ind_t(v)$, same as in case (c), or
\smallbreak
\item[(iv)] $exp(w)=exp(u)$, $ind_T(w)=ind_T(u)$, $ind_t(w)=ind_t(u)$ and $\mu_2<\mu_3$, same as in case (d), or
\smallbreak
\item[(v)] $exp(u)=exp(v)$, $ind_T(u)=ind_T(v)$, $ind_t(u)<ind_t(v)$, $\mu_2=\mu_3$ and $r_n=p_n, \ldots, r_{n-y}=q_{n-y}, r_{n-y-1}<q_{n-y-1}$. Then:
\smallbreak
\smallbreak
\begin{itemize}
\item[(-)] if $y=i$, then $k_{n-i-1}<r_{n-i-1}<q_{n-i-1}$ and thus $w<v$.
\smallbreak
\item[(=)] if $y>i$, then $k_{n-y-1}=r_{n-y-1}<q_{n-y-1}$ and thus $w<v$.
\end{itemize}

\end{itemize}
\end{itemize}

\bigbreak

So, we conclude that the ordering relation is transitive. Moreover, the element $t^{0}T^0$ that represents the unknot, is the minimum element of $\mathcal{T}_3$ and so $\mathcal{T}_3$ is a well-ordered set.
\end{proof}

\begin{remark}\rm
Proposition~\ref{totord} also holds for the set $\mathcal{T}_2$ since $\mathcal{T}_2$ is a proper subset of $\mathcal{T}_3$.
\end{remark}

\subsection{The set $\mathcal{B}_{T_{(2, 2p+1)}^c}$ as a spanning set of $KBSM(T_{(2, 2p+1)}^c)$}

In this subsection we reduce elements in $\mathcal{T}_3$, that is, elements of the (a), (b), (c)-forms of Eq.~(\ref{bel}), to elements in the set
\[
\mathcal{B}_{Tr^c}\ =\ \left\{ t^n\, T^v\ | \ n\in \mathbb{N}\cup\{0\},\ v\in \{0, \ldots, p\}\right\}.
\]

We deal with elements in the (a)-form of Eq.~(\ref{bel}) first.

\begin{lemma}\label{aform}
Let $n, k_i\in \mathbb{N}$ for all $i$. Then, the following holds in $KBSM(T_{(2, 2p+1)}^c)$:
\[
t_{0, n}^{k_{0, n}}\ \widehat{\simeq}\ \underset{i=0}{\overset{k}{\sum}}\, c_i\cdot t^i,\ {\rm where}\ k=\underset{i=0}{\overset{n}{\sum}}\, k_i,\ {\rm and}\ c_i\ {\rm coefficients\ for\ all}\ i.
\]
\end{lemma}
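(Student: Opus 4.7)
The plan is to prove Lemma~\ref{aform} by induction on $n$, the highest index of a looping generator appearing in $w := t^{k_0} {t_1'}^{k_1} \cdots {t_n'}^{k_n}$. The base case $n = 0$ is immediate, since then $w = t^{k_0}$ is already of the desired form with $c_{k_0} = 1$ and $c_j = 0$ for $j \neq k_0$.

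Before carrying out the inductive step I would emphasize the key geometric observation: by Definition~\ref{loopings} each $t_i'$ is a conjugate of $t$ by a word in $\sigma_1, \ldots, \sigma_i$, so all crossings appearing in $w$ lie among the moving strands, and no component of the link represented by $w$ winds around the second fixed strand of $B = \Sigma_1^{2p+1}$. Since the Kauffman bracket skein relation is local, any smoothing applied to $w$ produces terms whose components still wind only around the first fixed strand. This ensures that no $T$- or $\tau$-generator can appear in any resulting term, which is precisely what allows the final expansion to involve powers of $t$ alone.

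For the inductive step I would apply the Kauffman bracket skein relation at the outermost crossings $\sigma_n, \sigma_n^{-1}$ appearing in the rightmost copy of $t_n'$, and then work inward through $\sigma_{n-1}, \ldots, \sigma_1$. Each smoothing either merges the $t_n'$-strand with a lower-indexed moving strand, collapsing two components into one while preserving the total winding around the $t$-strand, or separates the strands and produces a trivial circle contributing a factor of $(-A^2 - A^{-2})$. After braid isotopy, Reidemeister~II cancellations, and conjugation moves from Theorem~\ref{algcco}, the output is a $\mathbb{Z}[A^{\pm 1}]$-linear combination of monomials in $t_0', \ldots, t_{n-1}'$, each of total exponent at most $k = \sum_i k_i$. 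Applying the inductive hypothesis to each such monomial then yields the desired expansion $\sum_{i=0}^k c_i\, t^i$.

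The main obstacle I anticipate is the combinatorial bookkeeping: after each smoothing one has to verify, using only braid isotopy and the relations of $B_{2,n}$, that the resulting braid word rewrites as a monomial in the $t_i'$'s with highest index strictly less than $n$, rather than in some more general family of conjugates. The ``no-$T$, no-$\tau$'' property is automatic from the geometric remark above, and the control on the total exponent follows because every smoothing either preserves the total winding around the $t$-strand or decreases it while producing the trivial-circle factor $(-A^2 - A^{-2})$. No essentially new tool beyond the skein relation, Theorem~\ref{algcco}, and the standard braid relations is needed, but the explicit reduction is combinatorially intricate and would benefit from an illustrative diagram analogous to Figure~\ref{prooflem}.
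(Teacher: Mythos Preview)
Your overall strategy---induction together with the skein relation, using the geometric observation that no $T$- or $\tau$-loops can arise---is the same as the paper's. The difference is in how the induction is organized, and this is where your argument has a genuine gap.

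You induct on $n$ alone and claim that after resolving the crossings in the rightmost copy of $t_n'$ the output is a combination of monomials in $t_0',\ldots,t_{n-1}'$. But when $k_n>1$ the remaining $k_n-1$ copies of $t_n'$ are still present, so the highest index has not dropped. More to the point, the paper's own computation shows exactly what happens when one applies the skein relation at the top: from $t_{0,n-2}^{k_{0,n-2}}\,t_{n-1}^{k_{n-1}}\,t_n^{k_n}$ one obtains three terms, one of which is $t_{0,n-2}^{k_{0,n-2}}\,t_{n-1}^{k_{n-1}-1}\,t_n^{k_n-1}$, still of index $n$. So a single pass does not reduce the index; it reduces either the index or the top exponent. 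Your induction on $n$ therefore does not close up without an inner induction on $k_n$, which you have not set up.

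The paper handles this by replacing induction on $n$ with \emph{strong induction on the total order of Definition~\ref{order}} on $\mathcal{T}_3$. The base case is the single identity $t\,t_1'\ \widehat{\simeq}\ c_1 t^2 + c_2 t^0$ (verified diagrammatically), and the inductive step is the specific three-term skein identity
\[
t_{0,n-2}^{k_{0,n-2}}\,t_{n-1}^{k_{n-1}}\,t_n^{k_n}\ \widehat{\simeq}\ t_{0,n-2}^{k_{0,n-2}}\,t_{n-1}^{k_{n-1}+k_n-2}\ +\ t_{0,n-2}^{k_{0,n-2}}\,t_{n-1}^{k_{n-1}-1}\,t_n^{k_n-1}\ +\ t_{0,n-2}^{k_{0,n-2}}\,t_{n-1}^{k_{n-1}+k_n},
\]
each term of which is strictly smaller in that order. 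This simultaneously resolves your ``combinatorial bookkeeping'' obstacle: rather than smoothing all $2n$ crossings of $t_n'$ and then trying to rewrite arbitrary braid words as $t_i'$-monomials, the paper smooths only the interaction between the two highest-index loops, and the three resulting terms are \emph{already} $t_i'$-monomials. Adopting the total order, or equivalently a double induction on $(n,k_n)$, and using this targeted skein identity would close the gap in your argument.
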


\begin{proof}
We prove Lemma~\ref{aform} by strong induction on the order of $t_{0, n}^{k_{0, n}} \in \mathcal{T}_3$.

\smallbreak
The base of induction is $tt_1\, \widehat{\simeq}\ c_1\, t^2\, +\, c_2\, t^0$, which is illustrated in Figure~\ref{reducett} by omitting the coefficients. 
\begin{figure}[!ht]
\begin{center}
\includegraphics[width=4.4in]{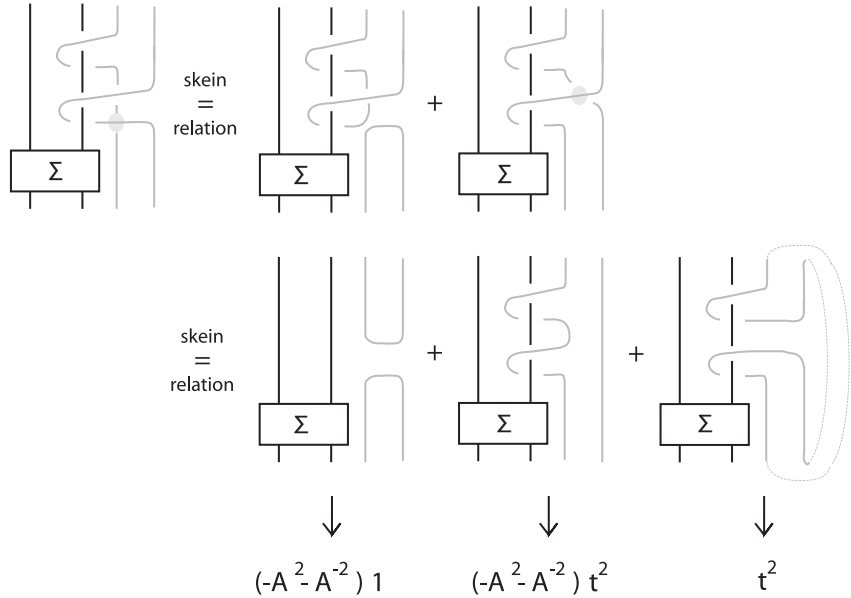}
\end{center}
\caption{The base of induction of Lemma~\ref{aform}.}
\label{reducett}
\end{figure}

Assume now that Lemma~\ref{aform} is true for all $w\, <\, t_{0, n}^{k_{0, n}} \in \mathcal{T}_3$. Then, for $t_{0, n}^{k_{0, n}}$, we repeat the steps shown in Figure~\ref{reducett} for the two $t$-loopings with the highest index and we have that:
\[
t_{0, n}^{k_{0, n}}\ =\ t_{0, n-2}^{k_{0, n-2}}\, \underline{t_{n-1}^{k_{n-1}}\, t_{n}^{k_{n}}}\ \overset{skein}{\underset{relation}{=}}\ t_{0, n-2}^{k_{0, n-2}}\, t_{n-1}^{k_{n-1}+k_{n}-2}\ +\ t_{0, n-2}^{k_{0, n-2}}\, t_{n-1}^{k_{n-1}-1}\, t_n^{k_{n}-1}\ +\ t_{0, n-2}^{k_{0, n-2}}\, t_{n-1}^{k_{n-1}+k_{n}}.
\]

According to Definition~\ref{order}, all monomials on the right hand side of the relation above are of less order than the initial monomial $t_{0, n}^{k_{0, n}}$, since the first and the third terms have index $(n-1)$, which is less than the index of $t_{0, n}^{k_{0, n}}$, and the exponent of $t_n$ in the second term is $k_{n}-1$, which is less than the exponent, $k_n$, of $t_n$ in $t_{0, n}^{k_{0, n}}$. Thus, the result follows from the induction hypothesis.
\end{proof}

We now treat elements in the (b)-form of Eq.~(\ref{bel}).

\begin{lemma}\label{bform}
Let $n, k_i\in \mathbb{N}$ for all $i$. Then, the following holds in $KBSM(T_{(2, 2p+1)}^c)$:
\[
t_{0, n}^{k_{0, n}}\, T_{n+1}^{v}\ \widehat{\simeq}\ \underset{i=0}{\overset{k}{\sum}}\, c_i\cdot t^i\, T_1^{v},\ {\rm where}\ k=\underset{i=0}{\overset{n}{\sum}}\, k_i,\ v\in \{0, \ldots, p\}\ {\rm and}\ c_i\ {\rm coefficients\ for\ all}\ i.
\]
\end{lemma}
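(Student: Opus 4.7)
My plan is to mirror the strong-induction strategy used in Lemma~\ref{aform}, treating the trailing factor $T_{n+1}^{v}$ as a bystander whenever possible. The induction is over the well-ordering of $\mathcal{T}_3$ established in Proposition~\ref{totord}, restricted to (b)-form monomials with a fixed value of $v$.

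For the base case I take $n=0$, where the element $t^{k_0}T_1^{v}$ is already of the required form with $c_{k_0}=1$ and all other coefficients zero. For the inductive step, fix $w=t_{0,n}^{k_{0,n}}T_{n+1}^{v}$ and assume the conclusion for every (b)-form monomial strictly less than $w$. Now carry out exactly the skein manipulation of Figure~\ref{reducett} on the two rightmost $t$-loopings $t_{n-1}^{k_{n-1}}t_n^{k_n}$, leaving $T_{n+1}^{v}$ untouched on the far right:
\begin{equation*}
w \;\widehat{\simeq}\; t_{0,n-2}^{k_{0,n-2}}\,t_{n-1}^{k_{n-1}+k_n-2}\,T_{n+1}^{v} \;+\; t_{0,n-2}^{k_{0,n-2}}\,t_{n-1}^{k_{n-1}-1}\,t_n^{k_n-1}\,T_{n+1}^{v} \;+\; t_{0,n-2}^{k_{0,n-2}}\,t_{n-1}^{k_{n-1}+k_n}\,T_{n+1}^{v}.
\end{equation*}
The middle summand is again a (b)-form monomial with $t$-index $n$ and $T$-index $n+1$, but with strictly smaller exponent at position $n$, so by Definition~\ref{order}(B)(ii)(c) it is strictly lower in the ordering than $w$, and the inductive hypothesis applies directly.

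The two outer summands have $t$-index $n-1$ while the trailing $T_{n+1}^{v}$ still sits at index $n+1$, creating a gap of two that momentarily leaves $\mathcal{T}_3$. To put them back, I rewrite $T_{n+1}^{v}$ using Lemma~\ref{grTkc} (applied in reverse to introduce a \emph{dummy} pair of adjacent $T$'s and then regroup) together with Lemma~\ref{l1} (to absorb any excess $T_{n+1}$-powers into $t_{n+1}$-powers when $v=p$ is reached). Each reduction expresses $T_{n+1}^{v}$ as a combination whose $T$-index drops to $n$ or $n-1$, producing a sum of (b)-form and (c)-form monomials in $\mathcal{T}_3$. The inductive hypothesis handles the (b)-form pieces, while the (c)-form pieces are dealt with by the companion lemma for the (c)-form that will follow in the paper; both ultimately reduce to $\sum_i c_i\,t^i\,T_1^{v}$. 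Collecting all contributions, and noting that the total exponent bound $k=\sum_{i=0}^{n}k_i$ is never exceeded by any of the skein resolutions or grouping moves, yields the desired expansion.

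The main obstacle I anticipate is the bookkeeping that ensures strict decrease in the order for the two outer summands after $T_{n+1}^{v}$ has been redistributed. Because Definition~\ref{order} interleaves comparisons of total exponent, $T$-index, $t$-index, and coordinate-wise exponent vectors, one must check several subcases in parallel — essentially a case analysis of the same flavor as the transitivity argument inside the proof of Proposition~\ref{totord} — to confirm that the regrouped monomials are strictly smaller than $w$, so that the strong induction closes without circularity.
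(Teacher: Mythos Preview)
Your induction is more laborious than necessary, and the step you single out as the ``main obstacle'' is in fact where the argument goes wrong. The paper's proof is a one-line observation: use conjugation in the closure to cycle the trailing $T_{n+1}^{v}$ around to the front,
\[
t_{0,n}^{k_{0,n}}\,T_{n+1}^{v}\ \widehat{=}\ T^{v}\,t_{1,n+1}^{k_{0,n}},
\]
so that $T^{v}$ sits at index $0$ as a pure spectator and the skein manipulations of Lemma~\ref{aform} apply verbatim to the $t$-part. No gap ever appears, no regrouping of $T$'s is needed, and no forward reference to the (c)-form lemma is required.

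Regarding your approach: the ``gap'' you worry about in the two outer summands is a notational artefact, not a mathematical obstruction. When the skein resolution merges the strands at indices $n-1$ and $n$ into a single strand, the strand formerly labelled $n+1$ is now the $n$-th moving strand, so $T_{n+1}^{v}$ simply becomes $T_{n}^{v}$ after relabelling; the outer summands are already (b)-form monomials of strictly lower order and the induction closes directly. Your proposed fix, by contrast, does not work as stated: reading Lemma~\ref{grTkc} in reverse expresses $T_{n+1}^{v}$ in terms of $T_{n}^{\,v-1}T_{n+2}$ and $T_{n+1}^{v-2}$, so the $T$-index goes \emph{up} to $n+2$, not down; and invoking the (c)-form lemma here is a forward reference that the paper deliberately avoids by the conjugation trick above.
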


\begin{proof}
The proof is similar to that of Lemma~\ref{aform}, by observing that 
\[
t_{0, n}^{k_{0, n}}\, T_{n+1}^{v}\ \widehat{=}\ T^{v}\, t_{1, n+1}^{k_{0, n}}
\]
\noindent and that a $T$-loop in the beginning of a $t$-monomial will not affect the steps of the proof of Lemma~\ref{aform}.
\end{proof}

Finally, we manage elements in the (c)-form of Eq.~(\ref{bel}) as follows:

\begin{lemma}\label{cform}
Let $n, k_i\in \mathbb{N}$ for all $i$. Then, the following holds in $KBSM(T_{(2, 2p+1)}^c)$:
\[
t_{0, n}^{k_{0, n}}\, T_{n}^{v}\ \widehat{\simeq}\ \underset{i=0}{\overset{k}{\sum}}\, c_i\cdot t^i\, T^{v},\ {\rm where}\ k=\underset{i=0}{\overset{n}{\sum}}\, k_i,\ {\rm and}\ c_i\ {\rm coefficients\ for\ all}\ i.
\]
\end{lemma}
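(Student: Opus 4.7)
The plan is to mimic the inductive strategy of Lemma~\ref{aform} verbatim, using strong induction on the order in the totally ordered set $(\mathcal{T}_3, <)$ from Proposition~\ref{totord}, while dragging the extra factor $T_n^{v}$ along throughout. The base case $n=0$ is immediate, since $t_0^{k_0}\, T_0^{v} = t^{k_0}\, T^{v}$ is already of the required form; and if $n\ge 1$ but $k_n = 0$, the element coincides with $t_{0,n-1}^{k_{0,n-1}}\, T_n^{v}$, which is of the $(b)$-form of Eq.~(\ref{bel}) (its $T$-looping has index $(n-1)+1$), so Lemma~\ref{bform} delivers the reduction at once.

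In the remaining inductive step, with $n\ge 1$ and $k_n\ge 1$, I would split
\[
t_{0, n}^{k_{0, n}}\, T_n^{v}\ =\ t_{0, n-2}^{k_{0, n-2}}\, \underline{t_{n-1}^{k_{n-1}}\, t_n^{k_n}}\, T_n^{v}
\]
and apply the Kauffman bracket skein relation to the crossings between the two underlined loopings, exactly as in the proof of Lemma~\ref{aform}. This produces three summands (up to coefficients in $R$):
\[
t_{0, n-2}^{k_{0, n-2}}\, t_{n-1}^{k_{n-1}+k_n-2}\, T_n^{v}\ +\ t_{0, n-2}^{k_{0, n-2}}\, t_{n-1}^{k_{n-1}-1}\, t_n^{k_n-1}\, T_n^{v}\ +\ t_{0, n-2}^{k_{0, n-2}}\, t_{n-1}^{k_{n-1}+k_n}\, T_n^{v}.
\]
The first and third summands have highest $t$-index $n-1$ while their $T$-looping keeps index $n = (n-1)+1$, so each is of the $(b)$-form, and Lemma~\ref{bform} reduces it to a linear combination of monomials $t^{j}\, T^{v}$. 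The middle summand remains of $(c)$-form but has strictly smaller total exponent $\left(\sum_{i=0}^{n} k_i\right) + v - 2$, hence is of smaller order in $\mathcal{T}_3$ by Definition~\ref{order}, so the inductive hypothesis applies and also delivers a $\sum c_j\, t^{j}\, T^{v}$ expression. Summing the three outputs then yields the claim with $k = \sum_{i=0}^{n} k_i$.

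The main point I would want to verify carefully is that the factor $T_n^{v}$ does not obstruct the skein resolution of $t_{n-1}^{k_{n-1}}\, t_n^{k_n}$. Geometrically, $T_n^{v}$ lives on the $(n{+}1)$-th moving strand and encircles both fixed strands in a region disjoint from the crossings between the $t_{n-1}$ and $t_n$ loopings, so the three-term decomposition of Lemma~\ref{aform} transfers verbatim with $T_n^{v}$ appended on the right. Once this geometric observation is secured, the rest is pure bookkeeping against the ordering of $\mathcal{T}_3$, closely parallel to the structure of the proofs of Lemmas~\ref{aform} and \ref{bform}.
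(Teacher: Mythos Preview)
Your overall strategy—strong induction on the order in $\mathcal{T}_3$, resolving the two top $t$-loopings via the skein relation—is the same as the paper's. The difference, and the source of a genuine error in your write-up, is precisely the point you flagged yourself.

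In the $(c)$-form element $t_{0,n}^{k_{0,n}}T_n^{v}$ there are exactly $n+1$ moving strands, and the $T_n^{v}$ sits on the \emph{same} strand as $t_n^{k_n}$. When you smooth the crossing between the $t_{n-1}$- and $t_n$-loopings and the two strands merge (your first and third summands), the $T$-looping is carried onto the merged strand and its index drops to $n-1$. Hence those summands are of the form $t_{0,n-2}^{k_{0,n-2}}\,t_{n-1}^{\,k_{n-1}+k_n}\,T_{n-1}^{v}$ and $t_{0,n-2}^{k_{0,n-2}}\,t_{n-1}^{\,k_{n-1}+k_n-2}\,T_{n-1}^{v}$, i.e.\ again of $(c)$-form, not $(b)$-form; so your appeal to Lemma~\ref{bform} for them is misplaced. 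You would also need to argue that the merged strand really comes out in the clean shape $t_{n-1}^{\,\bullet}T_{n-1}^{v}$ rather than with $t$- and $T$-loopings interleaved.

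The paper sidesteps both issues: it first cycles in the closure to send the last strand to the front, $t_{0,n}^{k_{0,n}}T_n^{v}\ \widehat{=}\ (t^{k_n}T^{v})\,t_{1,n}^{k_{0,n-1}}$, so that the two highest-index strands carry only $t$-loopings, applies the skein relation there (exactly as in Lemma~\ref{aform}), and then cycles back. The three outputs are then manifestly of $(c)$-form and of strictly lower order, so the inductive hypothesis handles all of them directly. Your argument is salvageable along the same lines—replace the appeal to Lemma~\ref{bform} by the induction hypothesis after correcting the $T$-index—but the conjugation trick is what makes the resolution clean.
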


\begin{proof}
We prove Lemma~\ref{cform} by strong induction on the order of $w\, =\, t_{0, n}^{k_{0, n}}\, T_{n}^v$. The base of induction is $tt_1T_1^v$ which can be written in the closure as $T_1^vtt_1$. Applying the skein relations as shown in Figure~\ref{reducett} for the general case, results in a sum of the terms $T^v$ and $t^2T^v$ in $\mathcal{T}_3$, which, according to Definition~\ref{order}, are of lower order than $tt_1T_1^v$.

\smallbreak

Assume now that the result holds for all elements of less order than $w$. Then, for $w\, =\, t_{0, n}^{k_{0, n}}\, T_{n}^v$, apply first conjugation on the closure to bring the term $t_n^{k_n}\, T_n^v$ in the beginning of the monomial, i.e. $t_{0, n}^{k_{0, n}}\, T_{n}^v\ \widehat{=}\ \left( t^{k_n}\, T^v \right)\, t_{1, n}^{k_{0, n-1}}$. Rewrite the resulting monomial in the form $\left( t^{k_n}\, T^v \right)\, t_{1, n-2}^{k_{0, n-3}}\, \underline{t_{n-1}^{k_{n-2}}\, t_{n-1}^{k_{n-1}}}$ and apply the skein relations as shown in Figure~\ref{cflem}.
\begin{figure}[!ht]
\begin{center}
\includegraphics[width=5.8in]{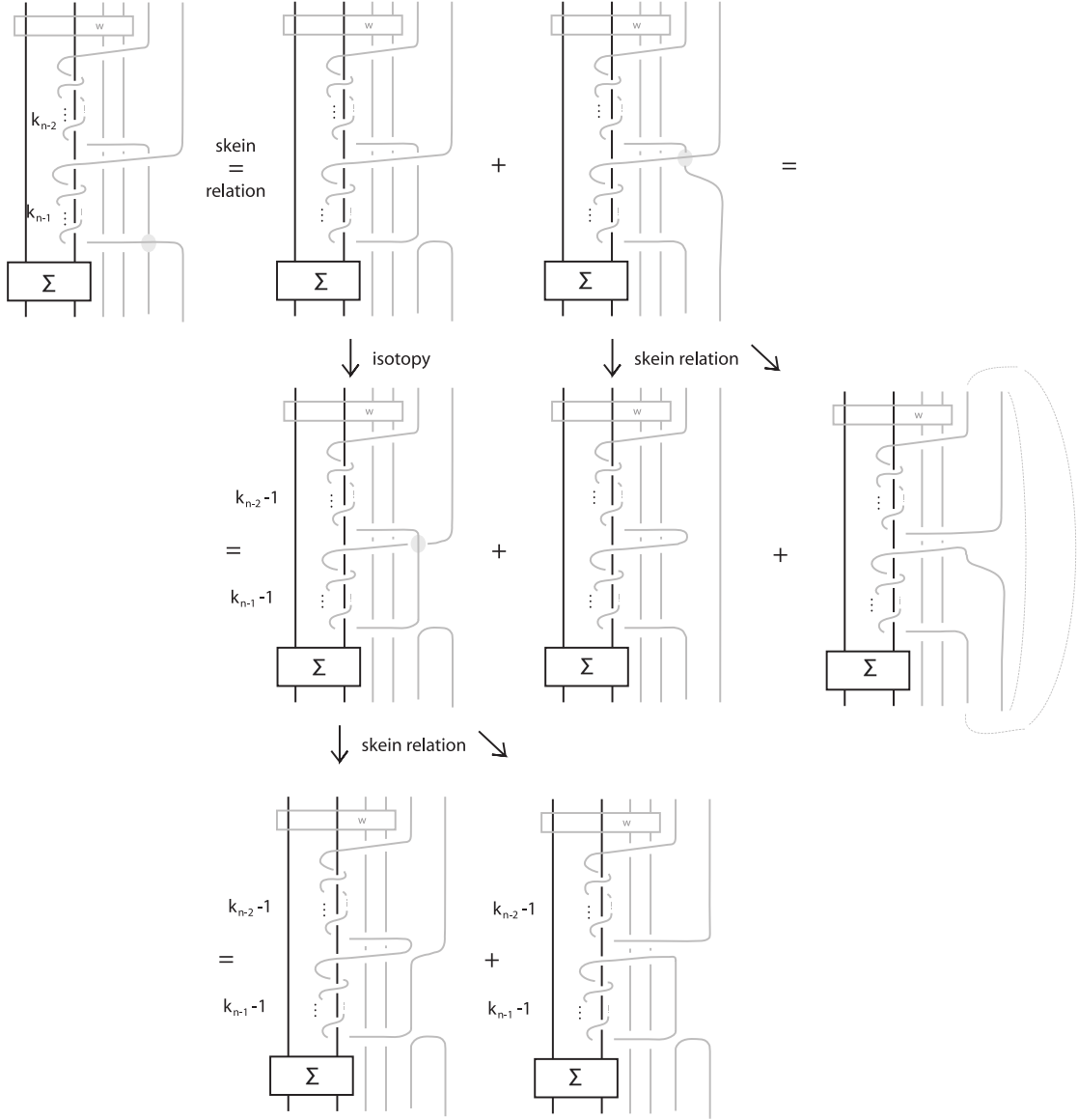}
\end{center}
\caption{The proof of Lemma~\ref{cform}.}
\label{cflem}
\end{figure}
We now apply conjugation on the closure of each resulting monomial in order to bring the term $t^{k_n}\, T^v$ in the end of each word. The result of this procedure is a sum of the following lower order than $w$ terms:
\[
\begin{array}{lcll}
u_1\, =\, t_{0, n-3}^{k_{0, n-3}}\, t_{n-2}^{k_{n-2}+k_{n-1}-2}\, t_{n-1}^{k_n}\, T_{n-1}^v & < & t_{0, n}^{k_{0, n}}\, T_{n}^v &,\ {\rm since}\ in(u_1)<in(w),\\
&&&\\
u_2\, =\, t_{0, n-3}^{k_{0, n-3}}\, t_{n-2}^{k_{n-2}-1}\, t_{n-1}^{k_{n-1}-1}\, t_n^{k_n}\, T_{n}^v & < & t_{0, n}^{k_{0, n}}\, T_{n}^v& ,\ {\rm since}\ k_{n-1}-1<k_{n-1},\\
&&&\\
u_3\, =\, t_{0, n-2}^{k_{0, n-3}}\, t_{n-2}^{k_{n-2}+k_{n-1}}\, t_{n-1}^{k_n}\, T_{n-1}^v & < & t_{0, n}^{k_{0, n}}\, T_{n}^v &,\ {\rm since}\ in(u_3)<in(w).
\end{array}
\]
The result follows from the induction hypothesis.
\end{proof}

An immediate consequence of Lemmas~\ref{aform}, \ref{bform} and \ref{cform}, is the following:

\begin{prop}
The set
\[
\mathcal{B}_{Tr^c}\ =\ \left\{ t^n\, T^v\ | \ n\in \mathbb{N}\cup\{0\},\ v\in \{0, \ldots, p\}\right\}
\]
\noindent spans the Kauffman bracket skein module of the complement of the $(2, 2p+1)$-torus knots.
\end{prop}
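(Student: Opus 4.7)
The plan is to combine the spanning set $\mathcal{T}_3$ from the previous section with the three reduction Lemmas~\ref{aform}, \ref{bform}, and \ref{cform} just proved. Since $\mathcal{T}_3$ is already known to span $KBSM(T_{(2,2p+1)}^c)$, it suffices to express every monomial of $\mathcal{T}_3$ as an $R$-linear combination of monomials $t^n T^v$ with $n \geq 0$ and $0 \leq v \leq p$.

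The argument then proceeds case-by-case along the trichotomy of Equation~(\ref{bel}): every monomial of $\mathcal{T}_3$ has one of the three forms $(a)$, $(b)$, $(c)$, each of which is dispatched by exactly one of the reduction lemmas. Lemma~\ref{aform} rewrites an $(a)$-form monomial $t_{0,n}^{k_{0,n}}$ as a linear combination of $t^i = t^i T^0 \in \mathcal{B}_{Tr^c}$. Lemma~\ref{cform} rewrites a $(c)$-form monomial $t_{0,n}^{k_{0,n}} T_n^v$ as a linear combination of $t^i T^v \in \mathcal{B}_{Tr^c}$. Lemma~\ref{bform} rewrites a $(b)$-form monomial $t_{0,n}^{k_{0,n}} T_{n+1}^v$ as a linear combination of $t^i T_1^v$, which after a relabeling of the moving strands in the closure corresponds to $t^i T^v \in \mathcal{B}_{Tr^c}$. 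Throughout all three cases the bound $v \leq p$ is inherited from the original $\mathcal{T}_3$ monomial, so the resulting monomials lie within the indexing set defining $\mathcal{B}_{Tr^c}$.

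The main technical point I anticipate is this last identification in the $(b)$-case, namely, that the two-strand form $t^i T_1^v$ produced by Lemma~\ref{bform} should be read as the element $t^i T^v$ of $\mathcal{B}_{Tr^c}$. This amounts to observing that the $T_1$-loop, which sits on the second moving strand, is isotopic in the closure in $T_{(2,2p+1)}^c$ to a $T$-loop positioned adjacent to the $t$-loops, so that the resulting link admits the representative $t^i T^v$ in the intended form of $\mathcal{B}_{Tr^c}$. Everything else reduces to a direct appeal to the three reduction lemmas together with the trichotomy of Equation~(\ref{bel}), and so the bulk of the real work has already been absorbed into those three lemmas.

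Pasting the three cases together, every monomial of $\mathcal{T}_3$ becomes an $R$-linear combination of elements of $\mathcal{B}_{Tr^c}$. Since $\mathcal{T}_3$ spans $KBSM(T_{(2,2p+1)}^c)$, the spanning property transfers to $\mathcal{B}_{Tr^c}$, finishing the argument.
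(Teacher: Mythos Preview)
Your proposal is correct and takes essentially the same approach as the paper: the paper states this proposition simply as ``an immediate consequence of Lemmas~\ref{aform}, \ref{bform} and \ref{cform}'' with no further argument, and your write-up is precisely an unpacking of that sentence along the trichotomy of Equation~(\ref{bel}). Your observation that the output $t^i\,T_1^{\,v}$ of Lemma~\ref{bform} must still be identified with $t^i\,T^{\,v}$ is a genuine detail that the paper leaves implicit; note, however, that this is not merely a ``relabeling of strands'' (the two closures have different numbers of components), but rather a further application of the same skein/conjugation manoeuvres already used in Lemmas~\ref{aform} and \ref{cform} to the two crossings hidden in $T_1^{\prime}=\sigma_1 T\sigma_1^{-1}$.
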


\subsection{Linear Independence of $\mathcal{B}_{T_{(2, 2p+1)}^c}$}\label{linind}

In this subsection we prove linear independence of $\mathcal{B}_{T_{(2, 2p+1)}^c}$, and thus, we show that the set $\mathcal{B}_{T_{(2, 2p+1)}^c}$ forms a basis for the Kauffman bracket skein module of the complement of the $(2, 2p+1)$-torus knots.

\smallbreak

Elements in $\mathcal{B}_{T_{(2, 2p+1)}^c}$ have no crossings in our setting and thus, conjugation, stabilization moves and skein relations cannot simplify the elements in $\mathcal{B}_{T_{(2, 2p+1)}^c}$ any further. Hence, it suffices to show that combed loop conjugations cannot isotope two different elements in $\mathcal{B}_{T_{(2, 2p+1)}^c}$. For that we introduce the {\it total winding} of elements in $\mathcal{B}_{T_{(2, 2p+1)}^c}$:

\begin{defn}\rm
Let $w=t^n\, T^v$, where $v\in \{0, \ldots, p\}$. We define the {\it total winding} of $w$, denoted by $win(w)$, to be the number $n+2\cdot v$.
\end{defn}

In other words, the total winding of an element in $\mathcal{B}_{T_{(2, 2p+1)}^c}$ counts the total number the moving strands twist around the two fixed strands. Obviously $win(t^n)=n$ and $win(T)=2$, since $T:=\tau\, t$.

\begin{prop}\label{twclc}
The total winding of an element $w$ in $\mathcal{B}_{T_{(2, 2p+1)}^c}$ is invariant under combed loop conjugations.
\end{prop}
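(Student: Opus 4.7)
The plan is to extend the total winding $win$ from the basis $\mathcal{B}_{T_{(2,2p+1)}^c}$ to a well-defined $\mathbb{Z}$-valued function on all of $B_{2,\infty}$, and then to verify directly that each combed loop conjugation in~(\ref{clceq}) preserves this extended winding.

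First, I would define $win$ on the generators of $B_{2,\infty}$ by setting
\begin{equation*}
win(t_i^{\prime}) \,=\, win(\tau_i^{\prime}) \,=\, 1,\qquad win(T_i^{\prime}) \,=\, 2,\qquad win(\sigma_j) \,=\, 0,
\end{equation*}
negating on inverses and extending additively to products. This agrees with $win(t^n T^v) = n + 2v$ on $\mathcal{B}_{T_{(2,2p+1)}^c}$. To see that $win$ descends to a well-defined function on the group $B_{2,\infty}$, I would inspect each defining relation of $B_{2,n}$ in turn and observe that both sides carry the same total count of each of the generator types $t$, $\tau$, $T$: for example, $\tau\sigma_1\tau\sigma_1$ and $\sigma_1\tau\sigma_1\tau$ both contain exactly two $\tau$'s (and no $t$ or $T$), and the mixed relation $t\sigma_1 T\sigma_1 = \sigma_1 T\sigma_1 t$ has one $t$ and one $T$ on each side. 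Equivalently, $win(\beta)$ is the linking number of the closed moving component of $\beta$ with the fixed torus knot $T_{(2,2p+1)}$, which is a homological invariant and therefore respects braid equivalence automatically.

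Next, for each of the four combed loop conjugations $\alpha \sim \rho^{\mp 1}\,\alpha\,comb(\rho^{\pm 1})$, I would verify that the conjugating factor $\rho^{\mp 1}\cdot comb(\rho^{\pm 1})$ has winding zero, whence $win$ is preserved by additivity. For instance, for $\rho=t$ one computes
\begin{equation*}
win\bigl(t^{-1}\cdot T^{-p}\,t^{-1}\,T^{p+1}\bigr) \,=\, -1 \,-\, 2p \,-\, 1 \,+\, 2(p+1) \,=\, 0,
\end{equation*}
and the analogous one-line computations give $1 - 2(p+1) + 1 + 2p = 0$ for $\rho=t^{-1}$, $-1 - 2p + 1 + 2p = 0$ for $\rho=\tau$, and $1 - 2p - 1 + 2p = 0$ for $\rho=\tau^{-1}$. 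Thus $win(\rho^{\mp 1}\,\alpha\,comb(\rho^{\pm 1})) = win(\alpha)$ for every $\alpha \in B_{2,\infty}$; specializing $\alpha$ to an element of $\mathcal{B}_{T_{(2,2p+1)}^c}$ proves the proposition.

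The main ``obstacle'' is essentially vacuous: the only non-arithmetic point is the well-definedness of $win$ in Step~1, and this collapses to a visual inspection of the defining relations of $B_{2,n}$, each of which is manifestly homogeneous in every loop-generator family (or, more conceptually, reduces to the fact that $win$ is a linking number).
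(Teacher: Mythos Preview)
Your proposal is correct and follows essentially the same approach as the paper: both arguments verify, by a direct additive computation on the explicit forms of $comb(\rho^{\pm 1})$ from Theorem~\ref{algcco}, that each combed loop conjugation contributes net winding zero. Your version is slightly more careful in that you explicitly justify why $win$ extends to a well-defined homomorphism on $B_{2,\infty}$ (via the defining relations, or equivalently as a linking number), a point the paper leaves implicit.
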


\begin{proof}
Consider $w\in \mathcal{B}_{T_{(2, 2p+1)}^c}$. Then, we consider the following cases:
\[
\begin{array}{rclcl}
win\left(t^k \cdot w \cdot comb(t^{-k})\right) & = & win\left(t^k \cdot w \cdot (T^{-p-1}\, t\, T^{p})^k\right) & = &\\
&&&&\\
& = & k\ +\ win(w)\ +\ (-2pk-2k+k+2pk) & = & win(w),\\
&&&&\\
win\left(T^{\pm k} \cdot w \cdot comb(T^{\mp k})\right) & = & \pm 2k\ +\ win(w)\ \mp 2k & = & win(w),\\
&&&&\\
win\left(\tau^{k} \cdot w \cdot comb(\tau^{-k})\right) & = & win\left(\tau^k \cdot w \cdot (T^{-p}\, t^{-1}\, T^p)^k\right) & = & \\
&&&&\\
& = & k\ +\ win(w)\ +\ (-2pk-k+2pk) & = & win(w).
\end{array}
\]
\end{proof}

Thus, we have proved that for $n\neq m\in \mathbb{N}$, we have that $t^n\not\sim t^m$ and $t^n\, T^v\ \not\sim t^m\, T^v$. The only case that is not considered by the discussion above is the case $t^{n-2}T^{v+1}\not\sim t^n\, T^v  \not\sim t^{n+2}T^{v-1}$, since
\[
win(t^{n-2}T^{v+1})\ =\ win(t^n\, T^v)=n+2v=win(t^{n+2}T^{v-1}).
\]
\noindent This follows from the fact that combed loop conjugations cannot alter the exponent of a $T$-looping generator whose exponent is less than or equal to $p$ (recall Eq.~\ref{clceq}).

\bigbreak

We have proved the main result of this paper:

\begin{thm}
The set
\[
\mathcal{B}_{Tr^c}\ =\ \left\{ t^n\, T^v\ | \ n\in \mathbb{N}\cup\{0\},\ v\in \{0, \ldots, p\}\right\}
\]
\noindent is a basis for $KBSM(T_{(2, 2p+1)}^c)$.
\end{thm}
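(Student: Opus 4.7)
The spanning of $KBSM(T_{(2,2p+1)}^c)$ by $\mathcal{B}_{T_{(2,2p+1)}^c}$ is already in hand from the preceding proposition, so only linear independence of the set $\{t^n T^v:n\in\mathbb{N}\cup\{0\},\ v\in\{0,\ldots,p\}\}$ remains. The first step is to reduce the problem to combed loop conjugations only. Every basis element $t^n T^v$ is a crossingless braid word in open-braid form, so the Kauffman skein relation has no crossing to act on and the trivial-component relation does not apply either. Markov stabilization and conjugation by a braiding generator $\sigma_j^{\pm 1}$ can only introduce or rearrange crossings, and therefore cannot transform a crossingless word into a different crossingless word. Hence the only equivalences in $KBSM(T_{(2,2p+1)}^c)$ that could identify two distinct elements of $\mathcal{B}_{T_{(2,2p+1)}^c}$ are the four combed loop conjugations listed in Eq.~(\ref{clceq}).

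The second step is to invoke Proposition~\ref{twclc}, which shows that the total winding $win(t^n T^v)=n+2v$ is invariant under the four combed loop conjugations. Consequently any two basis elements whose total windings differ are automatically distinct in $KBSM(T_{(2,2p+1)}^c)$, which rules out $t^n\sim t^m$ for $n\ne m$ and, more generally, $t^n T^v\sim t^m T^w$ whenever $n+2v\ne m+2w$.

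The third step handles the only remaining possibility: chains of elements with common total winding $n+2v$, namely
\[
\ldots,\ t^{n-2}T^{v+1},\ t^n T^v,\ t^{n+2}T^{v-1},\ \ldots.
\]
Here the bound $v\le p$ in the definition of $\mathcal{B}_{T_{(2,2p+1)}^c}$ is decisive. Inspection of Eq.~(\ref{clceq}) shows that the $T$-exponents introduced by a combed loop conjugation are confined to $\pm p$ and $\pm(p+1)$; any subsequent reduction back to a single basis monomial $t^{n'}T^{v'}\in\mathcal{B}_{T_{(2,2p+1)}^c}$ must go through the grouping and reduction relations of Section~\ref{sstc}, and none of those relations can migrate the $T$-exponent by $\pm 1$ while keeping it in the band $\{0,\ldots,p\}$. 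Thus $t^n T^v$ cannot be identified with any neighbour $t^{n\mp 2}T^{v\pm 1}$, and linear independence of $\mathcal{B}_{T_{(2,2p+1)}^c}$ follows.

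The main obstacle lies in making the third step fully rigorous: one has to verify that no clever sequence of combed loop conjugations, interleaved with skein reductions and the grouping lemmas of Section~\ref{sstc}, can migrate $t^n T^v$ to $t^{n\mp 2}T^{v\pm 1}$. A clean way to do this is to exhibit a second numerical invariant finer than $win$---for instance the residue of the $T$-exponent modulo $p+1$ read on normal forms---that is preserved by both the Kauffman skein relation and each of the four moves of Eq.~(\ref{clceq}). Establishing such an invariant is the technical heart of the argument; everything else is bookkeeping already packaged in Sections~\ref{sstc} and~\ref{basTrc}.
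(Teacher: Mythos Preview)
Your proposal follows essentially the same three-step strategy as the paper: reduce to combed loop conjugations because basis elements are crossingless, use the total winding invariant of Proposition~\ref{twclc} to separate elements with distinct $n+2v$, and then argue that within a fixed winding level the $T$-exponent cannot migrate while remaining in $\{0,\ldots,p\}$. The paper dispatches your third step in a single sentence, asserting that combed loop conjugations cannot alter the exponent of a $T$-looping whose exponent is at most $p$ (by inspection of Eq.~(\ref{clceq})); you are right to flag this as the delicate point, and your suggestion of a finer invariant (such as the $T$-exponent modulo $p+1$) is a reasonable way to make it watertight, though the paper does not go that far.
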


\section{On the Kauffman bracket skein module of 3-manifolds obtained by surgery along the trefoil knot}\label{trefc}

In this section we first demonstrate the braid method for computing the Kauffman bracket skein module of $(2, 2p+1)$-torus knots for the case of the complement of the trefoil knot, $Tr^c$ and we also discuss further steps needed for computing Kauffman bracket skein modules of c.c.o. 3-manifolds that are obtained from $S^3$ by surgery along the trefoil knot.

\subsection{The Kauffman bracket skein module of the complement of the trefoil knot}

We now demonstrate the method described in this paper for the computation of the Kauffman bracket skein module of the complement of the trefoil knot, $KBSM(Tr^c)$, via braids. Note that the fixed part of a mixed braid that represents $Tr^c$ is $\Sigma_1^{3}$.

\smallbreak

We first give algebraic expressions for combed loop conjugations (recall Theorem~\ref{algcco}). 

\begin{equation}\label{clctkc1}
\begin{array}{lclcl}
comb(t)\ =\ T^{-1}\, t^{-1}\, T^{2} & , & comb(\tau)\ =\ T^{-1}\, t\, T & , & comb(T^{\pm 1})\ =\ T^{\pm 1}.
\end{array}
\end{equation}

An immediate result of Eq.(\ref{clctkc1}) is that the set $\mathcal{T}_1$ spans $KBSM(Tr^c)$, since $\tau\, \sim\, t$. We now apply Lemma~\ref{lsq} in order to deal with the $T$-generators in the case of $Tr^c$. We do that by ``grouping'' the $T$-generators into pairs as follows:

\begin{equation}\label{Tsq1}
t_{0, n}^{\prime}\, T_{n+1, n+m}^{\prime}\ =\ \begin{cases} \underset{\phi=0}{\overset{\frac{m}{2}}{\sum}}\, \left[t_{0, n}^{\prime}\cdot \left({T_{n+1, n+1+ \phi}^{\prime}}\right)^2\right] &,\ {\rm for}\ m\ {\rm even}\\
&\\
\underset{\phi=0}{\overset{\frac{m-1}{2}}{\sum}}\, \left[t_{0, n}^{\prime}\, T_{n+1}^{\prime}\cdot \left({T_{n+2, n+1+ \phi}^{\prime}}\right)^2\right] &,\ {\rm for}\ m\ {\rm odd}\\
\end{cases}
\end{equation}

Equivalently, we have that the set $\mathcal{T}r_1$ that consists of elements in the form of Equation~(\ref{Tsq1}) is a spanning set of $KBSM(Tr^c)$. We now proceed by reducing the $T_i^{\prime}$'s in elements in the set $\mathcal{T}r_1$. This is done in two steps. Let $w$ be a monomial in the $t_i^{\prime}$'s and $T_i^{\prime}$'s such that $in(w)=n$. Then, the following holds in $KBSM(Tr^c)$:
\begin{equation}\label{trl1}
w\, {T_{n+1}^{\prime}}^2\ \widehat{\simeq}\ w\, {t_{n+1}^{\prime}}^2\, T_{n+1}^{\prime}
\end{equation}

\noindent The relation follows as a result of combed loop conjugation. Indeed we have the following:
\[
w\, {T_{n+1}^{\prime}}^2\ \sim\ w\, t_{n+1}^{\prime}\, T_{n+1}^{\prime}\, \left({T_{n+1}^{\prime}}^{-1}\, {t_{n+1}^{\prime}}^{-1}\, {T_{n+1}^{\prime}}^2 \right)\ =\ w\, t_{n+1}^{\prime}\, T_{n+1}^{\prime}\, comb(t_{n+1}^{\prime})\ \widehat{\simeq}\ w\, {t_{n+1}^{\prime}}^2\, T_{n+1}^{\prime}.
\]

\noindent Moreover, we have that:

\begin{equation}\label{trl2}
w\, {t_{n+1}^{\prime}}^p\, {T_{n+1}^{\prime}}^2 \ \widehat{\simeq}\ w\, {t_{n+1}^{\prime}}^{p+2}\, {T_{n+1}^{\prime}},\ \ {\rm since}
\end{equation}

\[
\begin{array}{lcl}
w\, {t_{n+1}^{\prime}}^p\, {T_{n+1}^{\prime}}^2 & \sim & w\, {t_{n+1}^{\prime}}^p\, t_{n+1}^{\prime}\, T_{n+1}^{\prime}\, \left({T_{n+1}^{\prime}}^{-1}\, {t_{n+1}^{\prime}}^{-1}\, {T_{n+1}^{\prime}}^2 \right)\ :=\\
&&\\
& := & w\, {t_{n+1}^{\prime}}^p\, t_{n+1}^{\prime}\, T_{n+1}^{\prime}\, comb(t_{n+1}^{\prime})\ \widehat{\simeq}\ w\, {t_{n+1}^{\prime}}^{p+2}\, {T_{n+1}^{\prime}}.
\end{array}
\]

\bigbreak

We now demonstrate how Relations~(\ref{trl1}) and (\ref{trl2}) affect elements in $\mathcal{T}r_1$. In the generic example that follows, we underline expressions which are crucial for the next step.

\begin{ex}
\[
\begin{array}{lcl}
t_{0, n}^{\prime}\, T_{n+1, n+m}^{\prime} & = & t_{0, n}^{\prime}\, T_{n+1, n+m-2}^{\prime}\, \underline{T_{n+m-1}^{\prime}\, T_{n+m}^{\prime}}\ \overset{Lemma~\ref{grT}}{\widehat{\simeq}}\\
&&\\
& \widehat{\simeq} & -A^2\, t_{0, n}^{\prime}\, T_{n+1, n+m-2}^{\prime}\, \underline{{T_{n+m-1}^{\prime}}^2}\, +\, A\, (-A^2-A^{-2})\, t_{0, n}^{\prime}\, T_{n+1, n+m-2}^{\prime}\ \overset{Lemma~\ref{l1}}{\widehat{\simeq}}\\
&&\\
& \widehat{\simeq} & -A^2\, t_{0, n}^{\prime}\, T_{n+1, n+m-3}^{\prime}\, \underline{T_{n+m-2}^{\prime}\, {t_{n+m-1}^{\prime}}^2\, {T_{n+m-1}^{\prime}}}\ + \\
&&\\
& + & A\, (-A^2-A^{-2})\, t_{0, n}^{\prime}\, T_{n+1, n+m-2}^{\prime}\ \overset{conj.}{\widehat{\simeq}}\\
&&\\
& \widehat{\simeq} & -A^2\, t_{0, n}^{\prime}\, T_{n+1, n+m-3}^{\prime}\, {t_{n+m-2}^{\prime}}^2\, \underline{T_{n+m-2}^{\prime}\,  {T_{n+m-1}^{\prime}}}\ +\\
&&\\
& + & A\, (-A^2-A^{-2})\, t_{0, n}^{\prime}\, T_{n+1, n+m-2}^{\prime}\ \overset{Lemma~\ref{grT}}{\widehat{\simeq}}\\
&&\\
& \widehat{\simeq} & A^4\, t_{0, n}^{\prime}\, T_{n+1, n+m-3}^{\prime}\, {t_{n+m-2}^{\prime}}^2\, {T_{n+m-2}^{\prime}}^2\\
&&\\
& - & -\ A^3\, (-A^2-A^{-2})\, t_{0, n}^{\prime}\, T_{n+1, n+m-3}^{\prime} \, {t_{n+m-2}^{\prime}}^2\ +\\
&&\\
& + & A\, (-A^2-A^{-2})\, t_{0, n}^{\prime}\, T_{n+1, n+m-2}^{\prime}\ \sim \ \ldots\\
\end{array}
\]
\end{ex}

Continuing that way, we will eventually obtain a sum of elements of the following forms:

\begin{equation}\label{bel1}
\begin{array}{cll}
(a)-form: & t_{0, n}^{\prime}\, {t_{n+1, n+r_1}^{\prime}}^{\lambda_{n+1, n+r_1}}\\
&&\\
(b)-form: & t_{0, n}^{\prime}\, {t_{n+1, n+r_2}^{\prime}}^{\mu_{n+1, n+r_2}}\, T_{n+r_2}^{\prime}\\
&&\\
(c)-form: & t_{0, n}^{\prime}\, {t_{n+1, n+r_3}^{\prime}}^{\phi_{n+1, n+r_3}}\, T_{n+r_3+1}^{\prime},
\end{array}
\end{equation}

\noindent where $n, r_1, r_2, r_3, \lambda_i, \mu_i, \phi_i\in \mathbb{N}$ for all $i$. Thus, we have reduced the number of the $T$-loop generators needed in order to generate $KBSM(Tr^c)$ and we have showed that the set $\mathcal{T}r_2$ that consists of elements in the (a), (b) and (c)-forms of Eq.~(\ref{bel1}), spans the Kauffman bracket skein module of the complement of the trefoil knot, $KBSM(Tr^c)$.

\smallbreak

Finally, using the ordering relation of Definition~\ref{order}, we express elements in $\mathcal{T}r_2$ to sums of elements in 

\[
\mathcal{B}_{Tr^c}\ =\ \left\{t^n\, T^j\ |\ n\in \mathbb{N}\cup \{0\},\ j\in \{0, 1\} \right\}.
\]

\noindent Linear independence of $\mathcal{B}_{Tr^c}$ follows as in \S~\ref{linind}.

\subsection{On the Kauffman bracket skein module of surgery along the trefoil knot}\label{trefcs}

In this subsection we deal with the Kauffman bracket skein module of a c.c.o. 3-manifold, $M$, obtained from $S^3$ by surgery along the trefoil knot. Our technique can be applied for the cases of integral and rational surgery and for 3-manifolds obtained by surgery along any $(m, r)$-torus knot also. It is worth mentioning now that a manifold obtained by rational surgery from $S^3$ along an $(2,2p+1)$-torus knot with rational coefficient $r/q$ is either the lens space $L(|q|,r(2p+1)^2)$, or the connected sum of two lens spaces $L(2,2p+1)\sharp L(2p+1,2)$, or a Seifert manifold (for more details see \cite{Mo}). Our intention is to describe the main ideas of the ``braid approach'' toward the computation of the Kauffman bracket skein module of $M$, where $M$ is obtained from $S^3$ by integral surgery, $r$, along the trefoil knot. In a sequel paper we shall compute $KBSM(M)$ in the general case of surgery along $(2,2p+1)$-torus knots. We now recall some results from \cite{DL1}.

\bigbreak

Let $K$ be an oriented link in $M$, a 3-manifold $M$ obtained from $S^3$ by surgery along the trefoil knot. We fix $\widehat{B}$ pointwise on its projection plane and we represent $K$ by a mixed link in $S^3$ as before. The difference from $Tr^c$ lies in isotopy. More precisely, surgery along $\widehat{B}$ is realized in two steps: the first step is to consider the complement $S^3\backslash \widehat{B}$ of the trefoil knot, and then attach to it a solid torus according to its surgery description. Thus, we view isotopy in $M$ as isotopy in $S^3\backslash \widehat{B}$ together with the {\it band moves} in $S^3$, which are similar to the second Kirby move. Note also that there are two types of band moves, but as shown in \cite{DL1}, only one of the two types is needed in order to formulate the analogue of the Reidemeister theorem of knots and links in $M$. For an illustration for the case of the 3-manifold obtained from $S^3$ by rational surgery $2/3$ along the trefoil knot see Figure~\ref{bmfig}, where the performance of the 2/3 band move is denoted by $(2, 3)$-b.m..

\begin{figure}[!ht]
\begin{center}
\includegraphics[width=5.3in]{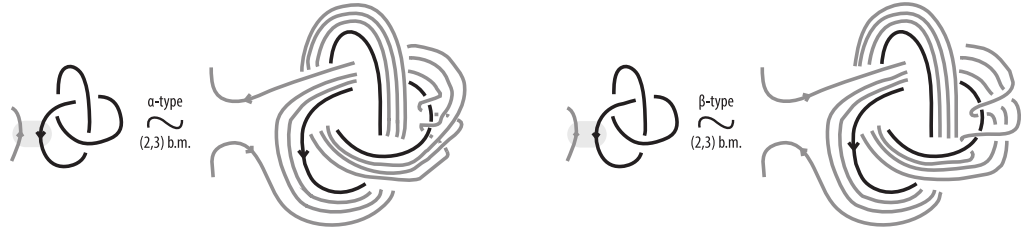}
\end{center}
\caption{The two types of band moves in the case of $2/3$-surgery.}
\label{bmfig}
\end{figure}

Thus, in order to compute $KBSM(M)$, it suffices to consider elements in a basis of $Tr^c$ and study the effect of the band moves on them. With a little abuse of notation we write: 
\[
KBSM(M)\ =\ \frac{KBSM(Tr^c)}{<band\ moves>}.
\]

Our starting point is the basis $\mathcal{B}_{Tr^c}$ that consists of elements of the form $\widehat{t^n\, T^j}$, where $n\in \mathbb{N}\cup \{0\}$ and $j\in \{0, 1\}$, and apply band moves in order to deduce them into elements in a spanning set of $KBSM(M)$. For convenience, we will be using both types of band moves to translate isotopy of knots and links in $M$ to isotopy of mixed links in $S^3$. For the type $\beta$ band moves for the case of integral surgery along the trefoil knot in open braid form see Figure~\ref{intsurgtref}, where after the performance of the band move, we also perform the technique of parting on the resulting mixed link.

\begin{figure}[!ht]
\begin{center}
\includegraphics[width=5in]{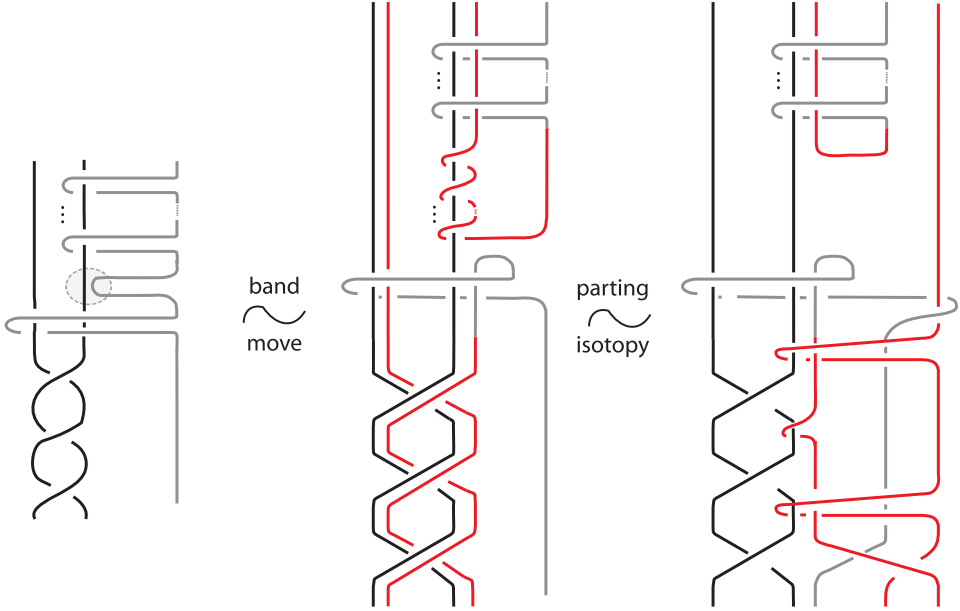}
\end{center}
\caption{Band moves and parting.}
\label{intsurgtref}
\end{figure}

The type $\alpha$ band move is translated on the level of mixed braids by the {\it geometric braid band moves}. A {\it geometric braid band move} is a move between geometric mixed braids which is a band move between their closures. It starts with a little band oriented downward, which, before sliding along a surgery strand, gets one twist {\it positive\/} or {\it negative\/}.

\begin{figure}[!ht]
\begin{center}
\includegraphics[width=3in]{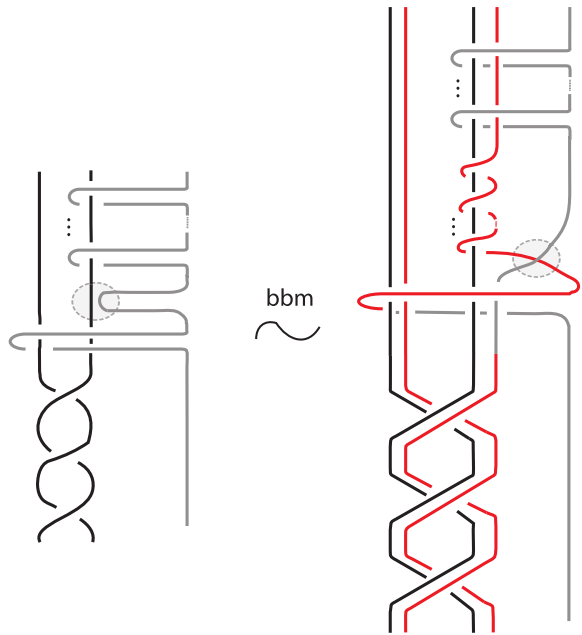}
\end{center}
\caption{Braid band moves.}
\label{intsurgtref1}
\end{figure}

After the performance of a bbm on a mixed braid we perform the technique of parting (see Figure~\ref{intsurgtref} for the case of integral surgery along the trefoil knot) and the technique of combing on the resulting mixed braid, in order to obtain an algebraic mixed braid followed by its ``coset'' part which corresponds to the description of the manifold $M$. 

\bigbreak

\begin{remark}\rm
Braid band moves (or bbm's) on mixed braids reflect braid equivalence on $M$ and as shown in \cite{LR1}, they may be always assumed to take place at the top part of a mixed braid and on the right of the specific surgery strand (\cite{LR2} Lemma~5) up to braid equivalence. In a similar way it follows that the place of the fixed part of a mixed link where the band move is performed is irrelevant (up to isotopy) and we will be considering band moves occurring at the top part of the fixed part of a mixed link in the case of the $\beta$-type bbm's, and after the $t$-generator in the case of the $\alpha$-type bbm's. In that way we obtain homogeneous algebraic expressions for bbm's.
\end{remark}

The algebraic expressions of the braid band moves (recall Figure~\ref{intsurgtref1}) are:

\[
t^n\, T^j\ \overset{r-bbm}{\underset{r\in \mathbb{Z}}{\sim}}\ t^r\,  \sigma_1^{\pm 1}\, t_1^{n}\, \sigma_2^2\, T_1^j\,\, comb,
\]

\noindent where $t_1\, =\, \sigma_1\, t\, \sigma_1$, $T_1\, =\, \sigma_1\, T\, \sigma_1$ and $comb$ corresponds to the combing of the new moving strands appearing from the performance of the bbm and that has the following algebraic expression:

\[
comb\ =\ \left(T_1\, \sigma_2^2\right)\, \left(\sigma_2\, t_1\, \sigma_2^{-1} \right)\, \left(t^{-1}\, T \right)\, \left(\sigma_2\, \sigma_1\, T^{-1}\, t\, \sigma_1\, \sigma_2^{-1} \right)\, \sigma_1^{-1}\, \sigma_2^{-1}.
\]

Applying the Kauffman skein relations, we cancel the braiding generators and obtain elements in $B_{Tr^c}$, where the total winding number increases by $r+3$.

\smallbreak

Consider now the performance of a $\beta$-type band move (recall Figure~\ref{intsurgtref}). After the performance of this type of band move and using isotopy in $Tr^c$, we may cancel the looping generator $t^n$ of the initial element and obtain $t^{n-r}$. Since the $comb$ part remains the same as before, we conclude that the performance of the band move will result in decreasing the total winding number by $3-r$.

\bigbreak

Thus, we may always reduce the elements in $B_{Tr^c}$ to elements in a finite subset of $B_{Tr^c}$, keeping control of the exponent of the $t$-generator as follows: if the starting element in $B_{Tr^c}$ is $t^n\, T^j$, where $n>r$, then perform a $\beta$-type band move to reduce the exponent of $t$ to $n-r+3$ and we consider the image of the result of the performance of the band move in $KBSM(Tr^c)$. We continue in that way if needed in order to reduce the exponent $n$ to an exponent in a finite subset of $\mathbb{N}$. The case where $r=0$ is of special interest, since the band moves will affect elements $w$ in $B_{Tr^c}$ in the same way, i.e., they only increase the total winding of $w$ by 3 and further research is required to understand how this affects monomials in $B_{Tr^c}$.

\end{document}